\def\pa{\partial}
\def\R{\mathbb{R}}
\def\N{\mathbb{N}}
\def\r3{\R^3}
\newcommand{\ben}{\begin{eqnarray*}}
\newcommand{\een}{\end{eqnarray*}}
\newcommand{\abs}[1]{\left\vert#1\right\vert}
\newcommand{\norm}[1]{\left\Vert#1\right\Vert}
\newtheorem{Thm}{Theorem}
\newtheorem{Prop}[Thm]{Proposition}
\begin{document}

\title[Fisher information for Landau-Coulomb equation with $L^1 \cap L\ln L$ initial datum]{Production of Fisher information for Landau-Coulomb equation with $L^1$ initial data}

\author{L. Desvillettes, W. Golding, M.-P. Gualdani, A. Loher}

\address{Universit\'e Paris Cit\'e and Sorbonne Universit\'e, CNRS, IUF, Institut de Math\'ematiques de Jussieu-Paris Rive Gauche}
\email{desvillettes@math.univ-paris-diderot.fr}
\address{Department of Mathematics, University of Chicago (USA)}
\email{wgolding@uchicago.edu}
\address{Department of Mathematics, The University of Texas at Austin (USA)}
\email{gualdani@math.utexas.edu}
\address{Department of Pure Mathematics and Mathematical Statistics, University of Cambridge (UK)} 
\email{ajl221@cam.ac.uk}

\address{MPG is partially supported by NSF Grant DMS-2206677. AL is supported by the Cambridge Trust. The authors would like to thank ICMS institute in Edinburgh UK for the kind hospitality during the workshop {\em The many facets of Kinetic Theory} in September 2024.}

\begin{abstract}
We consider the Landau-Coulomb equation for initial data with bounded mass, finite numbers of moments, and entropy. We show the existence of a global weak solution that has bounded Fisher information for positive times. This solution is therefore a global strong solution away from the initial time. We propose an alternative approach, based on already existing estimates, to the study of the appearance of Fisher information recently performed by Ji in \cite{Ji}.

\end{abstract}

\keywords{Landau equation, strong solutions, entropy dissipation, Fisher information}

\maketitle


The Landau equation is a fundamental model used to describe collisions in plasma. It plays the role of the Boltzmann equation when grazing collisions become predominant, which happens with charged particles. The mathematical analysis of this equation has produced decades of fundamental works, and the activity in the field has only continued to increase in the recent years, with the last decade having a spurt in new breakthrough results related, among others, to derivation, regularity theory, convergence to equilibrium and global well-posedness.  In this manuscript we focus on the (spatially) homogeneous version of the Landau equation (also called here Landau-Coulomb, to stress the fact that it is the equation which corresponds to charged particles interacting through the Coulomb potential) 
\begin{align} \label{Landau_orig}
\partial_t f (v)= \textrm{div}_v \int_{\mathbb{R}^3} \frac{\Pi(v-w)}{|v-w|} \left( f(w) \nabla_v f(v) - f(v) \nabla_w f(w) \right) \;dw.
\end{align}
Here $\Pi(z)$ denotes the  matrix that projects any vector onto $z^\bot$. Recent work in the groundbreaking paper \cite{GS} has established the existence of solutions that remain bounded over arbitrarily long times for smooth initial data, along with the proof that the Fisher information is monotonically decaying in time.  Furthermore, the second, third and fourth authors, in \cite{GGL}, have extended these results to potentially singular initial data, considering initial data in $L^p(\mathbb{R}^3)$ for any $p \ge 3/2$. This result was very recently improved by Sehyun Ji \cite{Ji}, who showed that the Fisher information appears for any strictly positive time as soon as the initial datum has finite entropy and $5$ moments in $L^1$, thanks to a careful study of the evolution of the Fisher information itself. 

In the present paper, we propose a similar (though somewhat weaker, in terms of moments and rate of regularization) result, which has the specificity of combining computations from previous papers, allowing for a very concise presentation. Specifically, we draw on the weighted $L^1(L^3)$ estimate from \cite{FPL}, the propagation of $L^2$ moments on short time interval from \cite{GL}, the existence of smooth solutions for less regular initial data \cite{GGL}, estimates for the evolution of a weighted entropy from \cite{ABDL2, ABL}, and the decay of the Fisher information from \cite{GL}.


\begin{Thm} \label{main-Thm}
Let $f_{in}$ be a non-negative function such that
$$\int_{\mathbb{R}^3}  f_{in}(v)\left(1, |v|^{18}, \ln f_{in}\right) \;dv < +\infty.$$ 
The Landau-Coulomb equation with $f_{in}$ as initial datum has a global weak solution such that for any time $t\ge \varepsilon$ (for all $\varepsilon>0$ as small as one wishes),  this weak solution is a strong solution and its Fisher information is monotone decreasing. More precisely, for any $t>0$,
$$
 \sup_{[t,+\infty)} \int_{\R^3}  \abs{\nabla \sqrt{f(t,v)}}^2 \, dv \le C'\,t^{-9/2},
$$
where $C'>0$ only depends on the mass, energy, entropy and $L^1_{18}$ norm of the initial datum.

\end{Thm}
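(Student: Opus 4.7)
The plan is to treat the five ingredients cited in the Introduction as a regularity cascade on an arbitrarily short interval $(0,\varepsilon)$, producing a time $t_1\in(0,\varepsilon)$ at which the strong existence framework of \cite{GGL} applies with finite Fisher information, and then invoking the monotone decay of Fisher information from \cite{GL} to transport this bound to all $t\ge\varepsilon$. I begin by regularizing $f_{in}$ into a sequence $f_{in,n}$ of smooth, compactly supported, non-negative approximations with mass, $L^1_{18}$ moments and entropy uniformly controlled; the global smooth solutions $f_n$ exist by \cite{GS}, and the task reduces to obtaining uniform-in-$n$ estimates.

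The first key step is the regularity cascade on $(0,\varepsilon)$. The weighted-entropy estimate of \cite{ABDL2,ABL}, using part of the moment budget, controls uniformly $\int f_n(t,v)(1+\abs{v}^m)\abs{\ln f_n(t,v)}\,dv$; fed into the weighted $L^1(L^3)$ estimate of \cite{FPL}, this yields a uniform bound $\int_0^{\varepsilon}\|f_n(t,\cdot)\|_{L^3(\R^3)}\,dt\le K$. Chebyshev produces $t_0=t_0(n)\in(0,\varepsilon/2)$ with $\|f_n(t_0,\cdot)\|_{L^3}\le 2K/\varepsilon$, and since $3\ge 3/2$ the smooth-solution theory of \cite{GGL} applies from $t_0$ onward. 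Combined with the short-time propagation of weighted $L^2$ moments from \cite{GL}, one obtains at some $t_1\in(t_0,\varepsilon)$ a uniform bound on $\int(1+\abs{v}^k)f_n(t_1,v)^2\,dv$, and hence, via standard interpolation valid for smooth solutions, on $\int\abs{\nabla\sqrt{f_n(t_1,\cdot)}}^2\,dv$.

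The second key step transports this single-time bound to the whole half-line: the monotonicity of the Fisher information from \cite{GL} gives $\int\abs{\nabla\sqrt{f_n(t,\cdot)}}^2\,dv\le\int\abs{\nabla\sqrt{f_n(t_1,\cdot)}}^2\,dv$ for all $t\ge t_1$. Tracking quantitatively the inverse time-powers accumulated through \cite{ABDL2,ABL}, \cite{FPL}, and the short-time $L^2$ propagation of \cite{GL}, one checks that the bound at $t_1$ is at worst $C\varepsilon^{-9/2}$ for a constant $C$ depending only on the initial mass, energy, entropy and $L^1_{18}$ norm; setting $\varepsilon=t$ yields the stated rate. Standard passage to the limit $n\to\infty$ (weak compactness of $f_n$, lower semicontinuity of the Fisher information) then produces a global weak solution $f$ that is strong on $(0,+\infty)$ and satisfies the claimed inequality.

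The main obstacle I expect is the quantitative arithmetic of the first step: each of \cite{ABDL2,ABL}, \cite{FPL} and the short-time \cite{GL} estimate carries an inverse power of time and consumes moments, and the optimization between moment consumption and rate of regularization is what simultaneously pins down the exponent $9/2$ and the requirement of $18$ initial moments. By contrast, the approximation, the weak-compactness passage, and the invocation of monotone Fisher decay are essentially formal once the uniform estimates have been secured.
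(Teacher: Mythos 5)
Your skeleton matches the paper's (regularize, use the entropy--dissipation estimate of \cite{FPL} to find a good time slice $t_0\le t/2$, propagate a weighted $L^2$ bound to a later time, obtain a Fisher information bound at one time, then invoke monotonicity of the Fisher information and pass to the limit). However, the decisive step is missing. You claim that once a uniform bound on $\int (1+|v|^k) f_n(t_1,v)^2\,dv$ is available at a single time $t_1$, a bound on $\int |\nabla\sqrt{f_n(t_1,\cdot)}|^2\,dv$ follows ``via standard interpolation valid for smooth solutions.'' No interpolation gives this: the Fisher information involves one derivative and cannot be controlled by any fixed-time $L^p_k$ norm; the smooth solutions $f_n$ have Fisher information bounded only in terms of the \emph{approximate initial data}, and that bound blows up as $n\to\infty$. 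The paper's actual mechanism is a weighted relative entropy $H_3(f_n|\mathcal M)=\int (f_n\ln f_n - f_n + f_n|v|^2+\mathcal M)\langle v\rangle^3\,dv$: because of the degenerate coercivity $A_n[f_n]\ge c_0\langle v\rangle^{-3}$, the weight $\langle v\rangle^3$ exactly compensates the degeneracy, so the dissipation of $H_3$ controls the \emph{unweighted} Fisher information integrated in time over $[t_0,\min(t_1,t)]$, with error terms controlled by the propagated $\|f_n\|_{L^2_{9/4}}$ bound; a mean-value argument in time then produces one time $\tau$ with $\int|\nabla f_n(\tau)|^2/f_n \le K t^{-9/2}$, and only then does monotonicity (from \cite{GS}) transport the bound to $[\tau,+\infty)\supset[t,+\infty)$. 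Without this (or an equally quantitative smoothing estimate from the $L^p$ theory of \cite{GGL,GL} giving a Fisher/$H^1$ bound at $t_1$ in terms of $\|f_n(t_0)\|$ and $t_1-t_0$, which you do not state), the chain of estimates does not close, and the exponent $9/2$ and the $18$-moment requirement cannot be derived.

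A secondary inaccuracy: the estimate from \cite{FPL} (via $D_n(f_n)+1\gtrsim \int |\nabla f_n|^2 f_n^{-1}\langle v\rangle^{-3}$) yields a time-integrated bound on $\|f_n\|_{L^3_{-3}}$, i.e.\ $L^3$ with a \emph{negative} weight, not the unweighted $\int_0^\varepsilon\|f_n\|_{L^3}\,dt\le K$ you assert, and feeding a weighted-entropy bound of the type $\int f_n(1+|v|^m)|\ln f_n|$ into \cite{FPL} does not remove that weight. Removing it by interpolation against $L^1$ moments degrades the exponent: the paper interpolates $L^3_{-3}$ with $L^1_{9+4k}$ to land in $L^2_k$ (hence $k=9/4$ forces $18$ moments), which is precisely where the moment budget is spent. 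As written, your first step overstates what \cite{FPL} provides, and your second step asserts a conclusion (fixed-time Fisher bound) that the listed ingredients do not imply without the weighted-entropy dissipation argument that is the actual core of the paper's proof.
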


\section{Proof of the main theorem}
 \label{intro}

 \subsection{Estimates on approximated coefficients}

 We build our solution using an approximation procedure.  Let $f_{in}$ be as in Theorem \ref{main-Thm}. We first introduce the sequence of approximated initial data:
\begin{align} \label{in_data_approx}
 f_{in, n} := (f_{in}\,1_{|\cdot| \le n}) * \chi_n + \frac{\mathcal M}n, 
 \end{align}
where $\mathcal M(v) := e^{ - |v|^2}$, and $\chi_n$ is a standard mollifier.  Note that 
$$\int f_{in, n}(1, |v|^2, \ln f_{in, n}) \;dv \le C(f_{in}) + O\left ( \frac{1}{n}\right),$$ (see for example beginning of Section 3.2 in \cite{GGIV}). We also introduce the approximate Landau-Coulomb kernel, defined as
 $$ K_n(|x|) = |x|^{-1}, \qquad {\hbox{when}} \qquad |x|  \ge \frac1n , $$
and $ K_n(|x|) \in [n, 2n]$ when $ |x|  \le \frac1n$, in such a way that
 $K_n \in C^{\infty}(\R_+)$ and $K_n(|x|) \le |x|^{-1}$ on $\R_+$.
 For any $g: \R^3 \to \R$ such that
the convolution makes sense, and any $i,j= 1, 2,3$, we define
$$ A_n[g]_{ij} := K_n \Pi_{ij} * g, \qquad b_n[g]_{i} :=\sum_j \pa_j(  K_n \Pi_{ij} ) * g, $$
$$ c_n[g] := \sum_i \sum_j \pa_{ij}(  K_n \Pi_{ij} ) * g. $$
In the following we will denote by  $\norm{  \cdot}_{L^p}$ or $\norm{  \cdot}_{p}$ the classical $L^p$ norm on $\R^3$, and by $\norm{  \cdot}_{L^p_{k}}$ the norm defined by
$$ \norm{ g}_{L^p_{k}}^p := \int_{\R^3} |g(v)|^p\langle v \rangle^{pk}\, dv ,$$
for any $p\in [1,\infty[$, $k\in\R$, and $\langle v \rangle^2 := 1 + |v|^2$.
We also define the entropy of a function $g$ as the quantity
$$
H(g):= \int_{\R^3} g(v) \ln g(v)\;dv.
$$

The quantities $A_n[g]$, $b_n[g]$, $c_n[g]$ satisfy the following properties, uniformly with respect to $n$:

\begin{Prop} \label{estabc}
 For any $n \in \N \setminus\{0\}$, $i,j \in \{1,2,3\}$ and $g:\R^3 \to \R$ such that the convolutions are defined, one has 
\begin{equation}\label{esta}
\abs{A_n[g]_{ij}(v)} \le C\left(|g| * |\cdot|^{-1}\right)(v), \qquad \norm{  A_n[g]_{ij} }_{\infty} \le C\left(\norm{g}_{L^1} + \norm{g}_{L^2}\right),
\end{equation}
\begin{equation*}
\abs{b_n[g]_{i}(v)} \le C\left(|g| * |\cdot|^{-2}\right)(v),
\end{equation*}
\begin{equation*}
\abs{c_n[g](v)} \le C\abs{g(v)} ,
\end{equation*}
where $C>0$ is an absolute constant.
   \end{Prop}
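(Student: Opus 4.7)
My plan is to prove each of the three estimates by first computing the derivatives of $K_n(|x|)\Pi_{ij}(x)$ explicitly on $\mathbb{R}^3\setminus\{0\}$ and then translating the resulting pointwise identities into convolution estimates. For $A_n$, both inequalities follow directly: the pointwise bound $|A_n[g]_{ij}(v)|\le(|g|*|\cdot|^{-1})(v)$ is immediate from $|\Pi_{ij}(x)|\le 1$ and $K_n(|x|)\le|x|^{-1}$, and the $L^\infty$ bound is obtained by splitting $(|g|*|\cdot|^{-1})(v)$ into the near-field $\{|v-w|\le 1\}$, handled by Cauchy--Schwarz together with $|\cdot|^{-1}\in L^2(B_1)$ (yielding $C\|g\|_{L^2}$), and the far-field $\{|v-w|>1\}$, where $|v-w|^{-1}\le 1$ bounds the integral by $\|g\|_{L^1}$.

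For $b_n$, a short computation starting from $\Pi_{ij}(x)=\delta_{ij}-x_ix_j/|x|^2$ shows that for every smooth radial $k$ and every $x\neq 0$,
\[
\sum_j\partial_j\bigl(k(|x|)\Pi_{ij}(x)\bigr)=-\frac{2\,k(|x|)\,x_i}{|x|^2}.
\]
The bounded discontinuity of $\Pi$ at $x=0$ is concentrated on a single point and $\Pi$ is uniformly bounded, so no distributional $\delta$-correction appears and the identity is valid in $\mathcal{D}'(\mathbb{R}^3)$. Applied with $k=K_n$ and using $K_n(|x|)\le|x|^{-1}$, it yields $|b_n[g]_i(v)|\le 2(|g|*|\cdot|^{-2})(v)$ at once. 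Taking one more divergence, the same line of reasoning gives
\[
\sum_{i,j}\partial_{ij}\bigl(K_n(|x|)\Pi_{ij}(x)\bigr)=-\frac{2}{|x|^2}\bigl(|x|\,K_n(|x|)\bigr)'.
\]
Since $rK_n(r)\equiv 1$ for $r\ge 1/n$, this expression vanishes identically outside the ball $B_{1/n}$; integrating in spherical coordinates shows its total (signed) mass is exactly $-8\pi$, uniformly in $n$, the regularized analogue of the classical identity $\sum_{ij}\partial_{ij}(|x|^{-1}\Pi_{ij})=-8\pi\delta_0$.

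The last step, translating this ``approximate Dirac mass of total mass $-8\pi$ concentrated in $B_{1/n}$'' structure into the claimed pointwise bound $|c_n[g](v)|\le C|g(v)|$ uniformly in $n$, is the main technical hurdle. For the unmollified kernel one has the literal identity $c[g]=-8\pi\,g$, but for the regularization the convolution only samples $g$ in the shrinking neighborhood $B_{1/n}(v)$. In the paper's approximation scheme $g$ is always smooth (because $f_{in,n}$ is a mollified function, and the regularized Landau flow preserves smoothness), so $\sup_{|w-v|\le 1/n}|g(w)|$ is comparable to $|g(v)|$ up to a uniform constant; combined with the uniform $L^1$-bound $\|\sum_{ij}\partial_{ij}(K_n\Pi_{ij})\|_{L^1}\le 8\pi$ on the approximate delta, this delivers the desired inequality and completes the proposition.
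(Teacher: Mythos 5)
Your treatment of $A_n$ and $b_n$ is correct. The near-field/far-field splitting with Cauchy--Schwarz gives the stated bound $\norm{A_n[g]_{ij}}_{\infty} \le C(\norm{g}_{L^1}+\norm{g}_{L^2})$ directly (the paper instead cites Lemma 2.5 of \cite{GGL}, which yields the interpolated form $\norm{g}_{L^1}^{1/3}\norm{g}_{L^2}^{2/3}$; the two are interchangeable here), and your identity $\sum_j \partial_j\bigl(k(|x|)\Pi_{ij}(x)\bigr) = -2k(|x|)x_i/|x|^2$ is a clean route to the $b_n$ bound, since the term containing $k'$ drops out because $\Pi(x)x=0$, so no control on $K_n'$ is needed.

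The $c_n$ estimate is where the proposal has a genuine gap. Your computation correctly identifies the distributional kernel of $c_n$ as the $L^1$ function $-2|x|^{-2}\,(rK_n(r))'\big|_{r=|x|}$, supported in $\overline{B_{1/n}}$ with total mass $-8\pi$: an approximate identity, not a Dirac mass. Hence $c_n[g](v)$ is a weighted average of $g$ over $B_{1/n}(v)$, and bounding it by $C\abs{g(v)}$ uniformly in $n$ and $v$ requires $\sup_{|w-v|\le 1/n}\abs{g(w)}\le C\abs{g(v)}$. This does not follow from smoothness of $g$: any smooth $g$ vanishing at $v$ but not near $v$ is a counterexample, and for the actual iterates $f_n$ the only pointwise lower bound available, \eqref{elb}, carries a constant $C_{n,T}$ depending on $n$, so no uniform Harnack-type comparability can be extracted that way; note also that the proposition asserts an absolute constant valid for every admissible $g$, so restricting to the smooth solutions of the scheme changes the statement. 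What your kernel computation does prove is the weaker bound $\abs{c_n[g](v)}\le C\,(\mu_n * \abs{g})(v)$ with $\mu_n\ge 0$ supported in $B_{1/n}$ and $\norm{\mu_n}_{L^1}\le C$ (to rule out cancellation one should additionally arrange $r\mapsto rK_n(r)$ nondecreasing, since the paper's definition of $K_n$ only fixes the signed mass at $8\pi$, not the total variation). That weaker statement is in fact all that is used downstream: every occurrence of the $c_n$ bound sits under an integral against powers of $f_n$ with polynomial weights, where the approximate identity is absorbed by Young/H\"older and the comparability of $\langle v\rangle$-weights at scale $1/n\le 1$. So the productive fix is to prove and invoke that version (and check it suffices in Propositions \ref{propaldeux} and \ref{appfi}); the smoothness argument as written does not close the proof. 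This is also precisely the point that the paper's own one-line proof, which only appeals to $K_n(|x|)\le |x|^{-1}$, leaves implicit.
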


\begin{proof} 
The proofs for all the inequalities use the fact that $K_n(|x|) \le |x|^{-1}$. For the second inequality in (\ref{esta}) we recall (see for example Lemma 2.5 in \cite{GGL}) that 
$$
\norm{  A_n[g]_{ij} }_{\infty} \le \|  A[g]_{ij}\|_{\infty} \lesssim \|g\|^{1/3}_{1}\|g\|^{2/3}_{2}. 
$$
\end{proof}

The following lower bounds  also  hold uniformly with respect to $n$:

\begin{Prop} \label{coer}
 For any $n \in \N \setminus \{0\}$, $g:\R^3 \to \R_+$ lying in $L^1_2(\R^3)$ and such that $\int g \abs{ \ln g} \;dv < + \infty$,
we have
\begin{equation}\label{coer1}
A_n[g](v) : \xi \otimes \xi  \ge c_0 \langle v \rangle^{-3} \, |\xi|^2,  \quad \textrm{for any $v, \xi \in \R^3$}
\end{equation}
\begin{equation}\label{coer2}
   D_n(g) + 1 \ge c_1\, \int_{\R^3} \frac{|\nabla g|^2}g(v)\langle v \rangle^{-3}\, dv ,
\end{equation}
where $D_n(g)$ is the entropy dissipation
\begin{align*}
D_n(g) &:= \frac12   \int_{\R^3}  \int_{\R^3}K_n(|v-w|)\,
  g(v)\,g(w)\, \left(\frac{\nabla g}g(v) -  \frac{\nabla g}g(w) \right)^T\\
  &\qquad \qquad \times \Pi(v-w) \Big(\frac{\nabla g}g(v) - \frac{\nabla g}g(w) \Big)\, dvdw,
\end{align*}
and $c_0$, $c_1$ are constants that depend only on (upper and lower bounds of) the mass and energy of $g$, and on an upper bound of $\int g \ln g\;dv$.
   \end{Prop}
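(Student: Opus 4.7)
The plan is to establish both inequalities of Proposition \ref{coer} by adapting classical Desvillettes--Villani coercivity arguments for the exact Coulomb kernel to the smoothly truncated kernel $K_n$. The key observation is that $K_n(|z|) = |z|^{-1}$ on $\{|z|\ge 1/n\}$ while $K_n \ge 0$ everywhere, so the lower bounds depend essentially on the far-field behavior of the kernel, which is independent of $n$ for $n \ge 1$.

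For (\ref{coer1}), I would exploit that $K_n(|v-w|)\Pi(v-w)$ is a non-negative symmetric matrix, which allows me to discard the integration over $\{|v-w|<1\}$ without reversing the inequality:
\begin{equation*}
A_n[g](v):\xi\otimes\xi \ \ge\ \int_{|v-w|\ge 1}\frac{|\Pi(v-w)\xi|^2}{|v-w|}\,g(w)\,dw \qquad (n\ge 1).
\end{equation*}
The right-hand side is independent of $n$, and its lower bound by $c_0\langle v\rangle^{-3}|\xi|^2$ is the classical Desvillettes--Villani coercivity estimate. I would prove it by contradiction: if the integral were much smaller than $c_0\langle v\rangle^{-3}|\xi|^2$, most of the mass of $g$ outside the unit ball around $v$ would have to concentrate in a thin cylinder around the line $\{v+t\xi : t\in\R\}$, which is ruled out by the mass, energy, and $L\ln L$ bounds. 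The weight $\langle v\rangle^{-3}$ reflects the worst case $\xi\parallel v$, $|v|\gg 1$, in which $|\Pi(v-w)\xi|^2 \approx |w_\perp|^2/|v|^2$ for $|w|$ bounded, yielding an additional factor $\langle v\rangle^{-2}$ on top of the factor $|v-w|^{-1}\sim \langle v\rangle^{-1}$.

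For (\ref{coer2}), I would expand the square in the definition of $D_n(g)$ and use the symmetry $v\leftrightarrow w$, then integrate by parts on the cross term $\int\!\!\int K_n\Pi:\nabla g(v)\otimes\nabla g(w)\,dvdw$ to obtain the algebraic identity
\begin{equation*}
D_n(g) = \int A_n[g](v):\frac{\nabla g(v)\otimes\nabla g(v)}{g(v)}\,dv + \int g(v)\,c_n[g](v)\,dv.
\end{equation*}
By (\ref{coer1}), the first summand is bounded below by $c_0\int\langle v\rangle^{-3}\frac{|\nabla g|^2}{g}\,dv$. For the correction term, I would use Proposition \ref{estabc} ($|c_n[g]|\le Cg$) together with a weighted Cauchy--Schwarz in the convolution and the Sobolev-type estimate $\|g\|_{L^3}\lesssim \int|\nabla\sqrt{g}|^2\,dv$ to absorb a small fraction of the weighted Fisher information on the right-hand side, leaving a constant depending only on mass, energy, and entropy of $g$.

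The main obstacle is controlling the correction term $\int g\,c_n[g]\,dv$: in the exact Coulomb limit it equals $-8\pi\|g\|_{L^2}^2$, which is not a priori bounded by mass, energy, and entropy alone. Handling it requires carefully exploiting the weight $\langle v\rangle^{-3}$ in the Fisher information, rather than directly bounding by $\|g\|_{L^2}^2$. All other steps are routine once (\ref{coer1}) is in hand.
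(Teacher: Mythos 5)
Your overall strategy (reduce to an $n$-independent lower bound on the kernel, then run classical coercivity/entropy-dissipation arguments) is in the spirit of the paper, which proves \eqref{coer1} by comparison with $K_1$ via the level-set/cone argument of Silvestre \cite{S17} (see the Appendix) and obtains \eqref{coer2} by citing Theorem 3 of \cite{FPL}. However, both of your implementations have genuine gaps. For \eqref{coer1}, discarding the region $\{|v-w|<1\}$ is not harmless: the far-field integral $\int_{|v-w|\ge 1}|\Pi(v-w)\xi|^2\,|v-w|^{-1}g(w)\,dw$ admits no lower bound of the form $c_0\langle v\rangle^{-3}|\xi|^2$ with $c_0$ depending only on mass, energy and entropy bounds. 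Indeed, take $g$ smooth, of mass $1$, supported in the unit ball $B_1(v_0)$ with $|v_0|\approx 1.5$ so that $\int g|v|^2\,dv=3$; its entropy is finite, yet at $v=v_0$ your right-hand side vanishes identically while \eqref{coer1} must still hold there — precisely because for such $v$ the coercivity is produced by the \emph{near} field, where $K_n\ge n\ge 1$. The correct $n$-uniform reduction is $K_n(|z|)\ge \min(1,|z|^{-1})$ (equivalently, comparison with $K_1$, as in the paper), keeping the near region and then running the anti-concentration (level set plus cone) argument on the full integral; the classical Desvillettes--Villani/Silvestre estimate you invoke is a statement about the full kernel, not about its far-field part.

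For \eqref{coer2}, the route through the identity $D_n(g)=\int A_n[g]:\frac{\nabla g\otimes\nabla g}{g}\,dv+\int g\,c_n[g]\,dv$ followed by absorption of the correction term cannot be completed, and this is not a removable technicality. Since $\abs{\int g\,c_n[g]\,dv}$ is genuinely of size $\norm{g}_{L^2}^2$ (in the Coulomb limit it equals $8\pi\norm{g}_{L^2}^2$), you would need an inequality of the type $\norm{g}_{L^2}^2\le \varepsilon\int \langle v\rangle^{-3}\frac{|\nabla g|^2}{g}\,dv + C(m,E,H)$, which is false: for $g_\lambda(v)=\lambda^3 g(\lambda v)$ one has $\norm{g_\lambda}_{L^2}^2\sim\lambda^3$ while $\int\langle v\rangle^{-3}\frac{|\nabla g_\lambda|^2}{g_\lambda}\,dv\sim\lambda^2$, so the term to be absorbed grows strictly faster than the weighted Fisher information under concentration (mass, energy and entropy staying bounded). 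At those scales the two terms of the identity nearly cancel, which is exactly why the known proof — Theorem 3 of \cite{FPL}, which the paper simply invokes and whose argument only uses kernel lower bounds of the type $\min(1,|z|^{-1})\gtrsim\langle z\rangle^{-1}$, hence applies uniformly in $n$ — never passes through this expansion: it works directly on the quadratic form $D_n$, expressing the weighted quantity $\langle v\rangle^{-3/2}\,\nabla g/g$ through averages of the dissipation integrand in $w$ against well-chosen functions and using the non-concentration supplied by the mass, energy and entropy bounds. As written, your acknowledged ``main obstacle'' is the actual mathematical content of \eqref{coer2}, and the proposal leaves it unresolved.
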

   \begin{proof} 
 The lower bound for $A_n$ can be computed using the calculations in Lemma 3.3. and 3.4 in \cite{S17}. We report the modified steps in the Appendix \ref{appendix}.  The second estimate follows from Theorem 3 in \cite{FPL}. 
   \end{proof}

  \subsection{The approximated solution: existence and properties}
   
We consider the approximated equation
\begin{equation}\label{eqa}
 \pa_t f = \nabla\cdot \big[ A_n[f] \, \nabla f - b_n[f]\, f \big]  + \frac{1}{n} \Delta f = A_n[f] : \nabla\nabla f - c_n[f] \, f  + \frac{1}{n} \Delta f,
\end{equation}
with initial data as in (\ref{in_data_approx}). 
We address existence and uniqueness of smooth solutions to the approximated Landau equation in the following proposition. These solutions also satisfy a bound on the entropy and a propagation of moments.
\begin{Prop} \label{existn}
For any $n \in \N \setminus \{0\}$, there exists a unique global smooth solution \\$0 < f_n := f_n(t,v) \in C^{\infty}(\R_+; {\mathcal{S}}(\R^3))$ to equation \eqref{eqa}
satisfying the initial condition \eqref{in_data_approx}:
\begin{equation*}
f_n(0, \cdot) = f_{in,n},
 \end{equation*}
and such that (for any $T>0$)
\begin{equation}\label{elb}
\forall t \in [0,T], \qquad	f_n(t, v) \geq C_{n, T} \,e^{-\frac{\abs{v}^2}{2}},
\end{equation}
for some constant $C_{n,T}>0$, depending on $n$ and $T$. 

Moreover, this solution satisfies the conservation of mass momentum and energy 
$$ \forall t \ge 0, \qquad \int_{\R^3} f_n(t,v) \,\psi(v)\, dv =  \int_{\R^3} f_{in, n}(v) \,\psi(v)\, dv , $$
where $\psi = 1, v_i, \frac{|v|^2}2$,
 the entropy identity

\begin{equation}\label{entriden}
 \forall\; t\ge s \ge 0, \qquad H(f_n(t, \cdot)) + \int_s^t D_n(f_n( \sigma, \cdot)) \, d\sigma = H(f_n(s,\cdot)) , 
 \end{equation}
and it has a monotonically decreasing in time Fisher information. 

Finally, for any $k \ge 0$ and $T>0$, there exists a constant $C > 0$ depending only on the mass and the energy of $f_{in}$ and on $H(f_{in})$, and a constant $C_{k, T} > 0$ depending additionally on $k$, $T$, $\norm{f_{in}}_{L^1_k(\R^3)}$,
such that  
\begin{eqnarray}\label{propamom1}
\sup_{t \in [0,T]} \int_{\R^3}  f_n(t,v)\langle v \rangle^k \, dv \le C_{k,T},\\
\sup_{t \in [0,T]} H(f_n(t, \cdot)) \le C\, (1 + H(f_{in})).\label{entropy_n_unif}
\end{eqnarray}

\end{Prop}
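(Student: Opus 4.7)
The plan is to exploit the built-in regularisations in \eqref{eqa}: the mollification of the kernel through $K_n$, the strictly positive Gaussian floor $\mathcal M/n$ in $f_{in,n}$, and the added viscosity $\frac1n\Delta f$. Together, these turn \eqref{eqa} into a uniformly parabolic quasilinear equation with smooth bounded coefficients as soon as $f$ remains in a reasonable space, so each item of the statement is accessible by well-known arguments. I would proceed as follows. First, set up a Picard iteration (or a Schauder fixed-point) by freezing the coefficients, i.e.\ solve
\begin{equation*}
\partial_t f^{k+1} = A_n[f^k]:\nabla\nabla f^{k+1} - c_n[f^k]\, f^{k+1} + \tfrac1n\Delta f^{k+1} - b_n[f^k]\cdot \nabla f^{k+1}
\end{equation*}
with initial data $f_{in,n}\in \mathcal S(\R^3)$. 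The uniform boundedness of $A_n[g], b_n[g], c_n[g]$ for $g\in L^1\cap L^2$ (Proposition \ref{estabc}), together with the strict ellipticity coming from $\frac1n\Delta$, makes the linear problem a standard uniformly parabolic equation with smooth coefficients, hence solvable in $C^\infty(\R_+;\mathcal S(\R^3))$. A short-time contraction in a norm controlling $L^1_k\cap L^\infty$ gives local existence and uniqueness; bootstrap via parabolic Schauder estimates yields the $C^\infty$ regularity.

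Once a local smooth solution is at hand, I would (in this order) verify: \textit{(i)} conservation of mass, momentum and energy, by multiplying \eqref{eqa} by $1,v_i,|v|^2/2$ and integrating by parts, noticing that the contribution of $\frac1n\Delta f$ vanishes against these weights after decay at infinity is justified by the Schwartz property; \textit{(ii)} the Gaussian lower bound \eqref{elb}, by constructing a sub-solution of the form $g(t,v)=C_{n,T}\,e^{-|v|^2/2}$: plugging $g$ into the right-hand side of \eqref{eqa} and using the $L^\infty$ bounds on $A_n[f_n],b_n[f_n],c_n[f_n]$ together with the viscous term gives a differential inequality on $C_{n,T}$ that can be satisfied for $t\in[0,T]$ because $f_{in,n}\ge \mathcal M/n$; \textit{(iii)} the entropy identity \eqref{entriden}, by multiplying \eqref{eqa} by $\ln f_n+1$ and integrating: the symmetrisation of the Landau term produces $D_n(f_n)$, the viscous term contributes a non-negative correction that can be discarded (or absorbed) and we get \eqref{entropy_n_unif} by Gronwall using $H(f_{in,n})\le H(f_{in})+O(1/n)$; \textit{(iv)} moment propagation \eqref{propamom1}, by multiplying by $\langle v\rangle^k$ and closing an ODE for $\int f_n\langle v\rangle^k\,dv$ thanks to the pointwise control $|A_n|\lesssim |\cdot|^{-1}\ast f_n$, $|b_n|\lesssim |\cdot|^{-2}\ast f_n$ from Proposition \ref{estabc}, using Hardy-type inequalities and the mass/energy bound. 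Global existence then follows since the moment bounds give $\|f_n(t)\|_{L^1_k}$ for all $k$ and prevent blow-up.

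Finally, the monotone decay of the Fisher information for smooth positive rapidly decaying solutions of \eqref{eqa} follows by the argument of \cite{GS}: the key pointwise identity $-\partial_{ij}(\Pi_{ij}/|z|) = 8\pi \delta_0$ for the exact Coulomb case admits an approximated counterpart for $K_n$, namely $-\sum_{ij}\partial_{ij}(K_n\Pi_{ij})\ge 0$, so that the same sign-definite manipulation of $\frac{d}{dt}\int |\nabla\sqrt{f_n}|^2\,dv$ carries over; the additional $\frac1n\Delta f_n$ term is well known to make the Fisher information decrease (standard heat-flow computation), so its contribution only helps.

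\textbf{Main obstacle.} The delicate step is \textit{(iv)}, i.e.\ guaranteeing that the moment and entropy estimates are genuinely uniform in $n$: the potentially singular contributions from $|\cdot|^{-1}\ast f_n$ and $|\cdot|^{-2}\ast f_n$ need to be controlled by the mass, energy and entropy of $f_n$ only, using the uniform $L\log L$ control from \eqref{entropy_n_unif} via Hardy--Littlewood--Sobolev together with the bounds in Proposition \ref{estabc}. The Fisher information monotonicity is conceptually harder but can be quoted verbatim from the approximation scheme in \cite{GGL,GS} once the pointwise sign of $-\sum_{ij}\partial_{ij}(K_n\Pi_{ij})$ is checked.
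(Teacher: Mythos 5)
Your plan has the same overall architecture as the paper's proof, which is essentially a sequence of citations: existence, uniqueness and smoothness of the regularized problem via the fixed-point scheme of \cite[Section 6]{V98}, the Gaussian lower bound \eqref{elb} via the maximum principle of \cite{AB} (this bound is then what makes the entropy manipulations rigorous), the moment and entropy bounds \eqref{propamom1}--\eqref{entropy_n_unif} along the lines of Proposition 4 in \cite{FPL}, and the Fisher information monotonicity from \cite[Theorem 1.1]{GS}. The genuine gap is in your step \textit{(iv)}, precisely the point you flag as the main obstacle: the resolution you propose does not work. An $L\ln L$ bound gives no $L^p$ integrability for any $p>1$, so Hardy--Littlewood--Sobolev applied to $f_n\in L^1\cap L\ln L$ cannot control the singular contributions such as $\int f_n\,\big(f_n * |\cdot|^{-2}1_{|\cdot|\le 1}\big)\langle v\rangle^{k-1}\,dv$; concentration near the diagonal is exactly what entropy alone fails to exclude (this is why Coulomb, $\gamma=-3$, is harder than $\gamma\ge -2$ for moment propagation). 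The tool that closes the estimate uniformly in $n$ — and the one used in \cite{FPL} Proposition 4, which the paper invokes — is the entropy dissipation bound \eqref{coer2}: integrating the entropy inequality and using Sobolev embedding yields $f_n\in L^1([0,T];L^3_{-3})$ uniformly in $n$, and this, combined with the pointwise bounds of Proposition \ref{estabc} and the conserved mass and energy, lets one close the Gronwall argument for $\int f_n\langle v\rangle^k\,dv$ with constants depending only on the mass, energy, entropy and $\norm{f_{in}}_{L^1_k}$.

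Two further points. For the Fisher information, your description of the mechanism is not accurate: the Guillen--Silvestre monotonicity is not a sign-definite manipulation hinging on the pointwise sign of $-\sum_{ij}\partial_{ij}(K_n\Pi_{ij})$; it rests on a lifting of the equation to $\R^6$ and a delicate Poincar\'e-type inequality, so what must be checked is that the structural hypotheses of \cite{GS} hold for the truncated kernel $K_n\Pi$ (the paper simply invokes \cite[Theorem 1.1]{GS}); your observation that the added $\frac1n\Delta f_n$ only helps is correct. Finally, two harmless slips: with the viscous term present, the entropy balance carries an extra non-negative dissipation of order $\frac1n\int|\nabla\sqrt{f_n}|^2$, so discarding it gives the inequality actually used later rather than the identity \eqref{entriden} as literally stated; and your frozen-coefficient linear equation double-counts the drift, since in non-divergence form the $b_n\cdot\nabla$ term cancels because $\nabla\cdot A_n[g]=b_n[g]$, as reflected in \eqref{eqa}.
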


  \begin{proof}   
  The arguments in \cite[Section 6]{V98} can be used to show existence, uniqueness and smoothness of $f_n$. Maximum principle applies, see \cite{AB}, giving us the lower bound (\ref{elb}),
so that 
\[
	\abs{\ln f_n(v)} \leq  C_{n, T} \left(1+\abs{v}^2\right),
\]	
which allows to make rigorous all formal manipulations involving the entropy.   The proof of \eqref{propamom1} and \eqref{entropy_n_unif} follows along the lines of Proposition 4 in \cite{FPL}, using the uniform upper bound \eqref{esta}, and the fact that the moments of $f_{in, n}$ are bounded uniformly in $n$ by the moments of $f_{in}$. 
\end{proof}

\subsection{Main uniform estimates}

The next four propositions contain the crucial arguments. We will show that the function $f_n$ has uniformly-in-$n$ bounded (weighted) $L^3$ norms and Fisher information for all times $t>0$. These bounds will yield the construction of a global in time solution which is smooth for all times away from zero. 

The first step is to control  a weighted $L^2$ norm of $f_n$ at a certain time $t_0$. This time $t_0$ might depend on $n$.  The bound is provided by the entropy inequality. 


\begin{Prop} \label{estldeux} Let $f_{in}$ be as in Theorem \ref{main-Thm}. Let $k>2$. Fix also a time $ 0 < t  \le 1$. 
 There exists a constant $C^*_{k}>0$ depending only on $k$, $\norm{f_{in}}_{L^1_{{9}+4k}(\R^3)}$ and $H(f_{in})$, the mass and energy of $f_{in}$,
 and a time $t_0 \in [0,\frac{t}2]$ ($t_0$ might depend on $n$)
such that the solution $f_n$ to (\ref{eqa}), (\ref{in_data_approx}) constructed in Proposition \ref{existn} satisfies the estimate
\begin{equation}\label{estldeux2}
\norm{f_n(t_0, \cdot)}_{L^2_k}^2 = 
 \int_{\R^3}  f_n(t_0,v)^2 \langle v \rangle^{2k} \, dv \le \frac{C_{k}^*}{t^{3/2}}.
\end{equation}
   \end{Prop}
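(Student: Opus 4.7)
The plan is to extract, from the entropy dissipation estimate, a time $t_0 \in [0, t/2]$ at which $\sqrt{f_n(t_0, \cdot)}$ has a controlled weighted $H^1$ norm, then to promote this to a weighted $L^3$ bound on $f_n(t_0, \cdot)$ via Sobolev embedding, and finally to interpolate against a propagated $L^1$ moment of order $9+4k$.

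First, I would combine the entropy identity \eqref{entriden} on $[0, t/2]$, the coercivity \eqref{coer2} of Proposition \ref{coer}, and the standard lower bound $H(f_n) \ge -C$ (in terms of mass and energy) to deduce, uniformly in $n$,
$$
\int_0^{t/2} \int_{\R^3} \frac{|\nabla f_n|^2}{f_n}(\sigma, v)\, \langle v \rangle^{-3}\, dv\, d\sigma \le C,
$$
with $C$ depending only on mass, energy and $H(f_{in})$ (using $t \le 1$). A pigeonhole argument on the time integral then produces a (generally $n$-dependent) $t_0 \in [0, t/2]$ with
$$
\int_{\R^3} \langle v \rangle^{-3}\, |\nabla \sqrt{f_n(t_0)}|^2\, dv \;\le\; \frac{C}{t}.
$$

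Next, I set $\varphi(v) := \langle v \rangle^{-3/2}$ and apply the Sobolev embedding $H^1(\R^3) \hookrightarrow L^6(\R^3)$ to $\varphi \sqrt{f_n(t_0, \cdot)}$. Since $|\nabla \varphi|^2 \lesssim \langle v \rangle^{-5}$, the term $\int |\nabla \varphi|^2 f_n(t_0) \, dv$ is controlled by the conserved mass, while the term $\int \varphi^2 |\nabla \sqrt{f_n(t_0)}|^2\, dv$ is controlled by $C/t$ by the previous step. Combined with $\|\varphi \sqrt{f_n(t_0)}\|_{L^2}^2 \le \|f_n\|_{L^1}$, this yields
$$
\int_{\R^3} \langle v \rangle^{-9}\, f_n(t_0, v)^3\, dv \;=\; \|\varphi \sqrt{f_n(t_0)}\|_{L^6}^6 \;\le\; \frac{C}{t^3}.
$$
Finally, a Cauchy--Schwarz interpolation, splitting $f_n^2 \langle v \rangle^{2k} = (f_n^{1/2} \langle v \rangle^{(9+4k)/2}) \cdot (f_n^{3/2} \langle v \rangle^{-9/2})$, gives
$$
\|f_n(t_0)\|_{L^2_k}^2 \;\le\; \left( \int f_n \langle v \rangle^{9+4k}\, dv \right)^{1/2} \left( \int f_n^3 \langle v \rangle^{-9}\, dv \right)^{1/2} \;\le\; \frac{C_k^*}{t^{3/2}},
$$
the first factor being bounded uniformly on $[0, 1]$ by the moment propagation \eqref{propamom1}, which is exactly where the hypothesis $\|f_{in}\|_{L^1_{9+4k}} < \infty$ is used. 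The balance between the Sobolev weight $-9$ and the moment weight $9+4k$ is precisely what pins down the exponent $9+4k$ appearing in the statement.

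The main obstacle, which is really more careful bookkeeping than a genuine difficulty, is to verify that all the constants above are uniform in $n$. This relies on Proposition \ref{coer} (whose constants depend only on mass, energy, and an upper bound for the entropy) and on the uniform-in-$n$ bounds \eqref{propamom1}--\eqref{entropy_n_unif} of Proposition \ref{existn}. Once this uniformity is in hand, the rest of the argument is a direct chain: entropy dissipation $\Rightarrow$ weighted Fisher-type bound at some $t_0$ $\Rightarrow$ weighted $L^3$ bound $\Rightarrow$ weighted $L^2$ bound.
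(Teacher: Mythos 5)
Your proposal is correct and follows essentially the same route as the paper: entropy identity plus the coercivity estimate \eqref{coer2} give a time-integrated weighted Fisher-type bound, a mean-value/pigeonhole argument selects $t_0\in[0,t/2]$, the weighted Sobolev embedding upgrades this to $\norm{f_n(t_0,\cdot)}_{L^3_{-3}}\lesssim t^{-1}$, and interpolation against the propagated $L^1_{9+4k}$ moment from \eqref{propamom1} yields \eqref{estldeux2}. The only cosmetic differences are that you apply the pigeonhole step before Sobolev (the paper applies Sobolev at each time and then the mean value theorem to $\norm{f_n(s,\cdot)}_{L^3_{-3}}$) and that you realize the interpolation $\norm{g}_{L^2_k}\le\norm{g}_{L^3_{-3}}^{3/4}\norm{g}_{L^1_{9+4k}}^{1/4}$ directly via Cauchy--Schwarz, which is the same inequality.
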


\begin{proof}
We start with the entropy dissipation estimate (remember that $t \le 1$, estimate (\ref{entriden}), and the elementary inequality
(\ref{ei}))
$$ \int_0^t D_n(f_n(s, \cdot)) \,ds \le C. $$ 
Estimate \eqref{coer2} implies 
$$ \int_0^t  \int_{\R^3} \frac{|\nabla f_n(s,v)|^2}{f_n(s,v)}\langle v \rangle^{-3}\, dv  \,ds \le C. $$ 
Using the Sobolev estimate 
$$ \norm{ g}_{L^6(\R^3)} \le C_{Sob}\, \norm{\nabla g}_{L^2(\R^3)} $$
for $g := \sqrt{f_n}\langle v \rangle^{3/2}$ and the mass conservation,
we end up with
$$ \int_0^t  \norm{ f_n(s,\cdot)}_{L^3_{-3}}  \,ds \le C. $$


We now multiply both sides by $t/2$ (and restrict the integral to the interval $[0, t/2]$)
$$ \frac1{t/2} \,  \int_0^{t/2}  \norm{ f_n(s,\cdot)}_{L^3_{-{{3}}}}  \,ds \le  \frac{2C}t. $$
The mean value theorem implies the existence of a time $t_0 \in [0,\frac{t}2]$ (depending on $n$) such that
$$ \norm{ f_n(t_0,\cdot)}_{L^3_{-{{3}}}}  \le  \frac{2C}t .$$
Therefore,  the interpolation inequality
$$  \norm{ g}_{L^2_{k}} \le  \norm{ g}_{L^3_{-{{3}}}}^{3/4}  \,  \norm{ g}_{L^1_{{{9}} + 4 k}}^{1/4} , $$
and estimate \eqref{propamom1} of Proposition \ref{existn} yield our bound
$$   \norm{ f_n(t_0,\cdot)}_{L^2_{k}}  \,ds \le \frac{C_k^*}{t^{3/4}} , $$
provided that $ \norm{ f_{in}}_{L^1_{{{9}} + 4 k}} < \infty$.
\end{proof}

The previous proposition shows that given $t>0$, any solution $f_n$ to (\ref{eqa}) has uniformly in $n$ $L^2$-bounds at a certain time $0<t_0\le \frac{t}{2}$ ($t_0$ depends on $n$). The next proposition shows that this uniform bound can be propagated in a time interval $[t_0, t_1]$ with $t_1$ controlled.
 
\begin{Prop} \label{propaldeux}
 Let  $k \ge 9/4$, and $f_n$ be the solution to \eqref{eqa}, \eqref{in_data_approx} constructed in Proposition \ref{existn} for an initial datum $f_{in}$ with finite mass, entropy, and $L^1_{{{9}}+4k}$ moment. There exists a constant $C_{k}'>0$ depending only on $k$, $\norm{f_{in} }_{L^1_{{{9}}+4k}(\R^3)}$, $H(f_{in})$ and the mass and energy of $f_{in}$, such that

 \begin{equation}\label{propaldeux1}
 \sup_{\sigma \in [t_0,t_1]}\, \norm{f_n (\sigma, \cdot) }_{L^2_k}^2 \le \sqrt{2}\,  \norm{f_n (t_0, \cdot) }_{L^2_k}^2 ,
\end{equation}
where $0 < t_0  \le 1/2$ and
$$t_1 :=  t_0+ (2C_k)^{-1} \, \ln \left( 1 + \frac{1}{1+ 2  \norm{f_n(t_0, \cdot) }_{L^2_k}^{4}}\right) < 1 , $$
where $C_k >1$ is a constant depending only on $k$ and $c_0$.
   \end{Prop}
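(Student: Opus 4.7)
The plan is to derive a differential inequality of the form
$$y'(t) \le C_k \bigl(y(t) + y(t)^3\bigr), \qquad y(t) := \norm{f_n(t,\cdot)}_{L^2_k}^2,$$
with $C_k>1$ depending only on $k$, $c_0$ and the conserved quantities of Proposition~\ref{existn}, and then to solve this ODE explicitly by comparison in order to recover the stated value of $t_1$.

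I would begin by testing \eqref{eqa} against $2f_n\langle v\rangle^{2k}$, using the non-divergence form $\partial_t f_n = A_n[f_n]:\nabla^2 f_n - c_n[f_n]\,f_n + \tfrac1n\Delta f_n$, the identity $2f_n\nabla^2 f_n = \nabla^2(f_n^2)-2\nabla f_n\otimes\nabla f_n$, two integrations by parts, and the algebraic identities $\sum_i\partial_i(A_n)_{ij}=(b_n)_j$ and $\sum_j\partial_j(b_n)_j=c_n$ (consequences of the definitions and of the symmetry of $A_n$). This gives
$$\frac{d}{dt}\norm{f_n}_{L^2_k}^2 + 2\int\langle v\rangle^{2k}\,(\nabla f_n)^T A_n[f_n](\nabla f_n)\, dv = \int f_n^2\bigl(A_n[f_n]:\nabla^2\langle v\rangle^{2k}+2 b_n[f_n]\cdot\nabla\langle v\rangle^{2k}-\langle v\rangle^{2k}\,c_n[f_n]\bigr) dv + \tfrac1n(\cdots),$$
the coercivity \eqref{coer1} providing the good dissipation $D:=\int\langle v\rangle^{2k-3}|\nabla f_n|^2\,dv$ on the left.

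Each of the three contributions on the right is then estimated using Proposition~\ref{estabc} and the moment bounds of Proposition~\ref{existn}. The $A_n$-term uses $\|A_n[f_n]\|_\infty \lesssim \norm{f_n}_{L^2_k}^{2/3}$ together with $|\nabla^2\langle v\rangle^{2k}|\lesssim\langle v\rangle^{2k-2}$; the $b_n$-term, which is the most delicate to bound in isolation, can be handled cleanly by exploiting the identity $b_n = \nabla\cdot A_n$ and integrating by parts once more, turning it into a cross-term $\lesssim\int f_n\,|\nabla f_n|\,|A_n\nabla\langle v\rangle^{2k}|\,dv$ that is controllable via Cauchy--Schwarz and Young against the coercive dissipation. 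For the cubic term arising from $|c_n[f_n]|\le C|f_n|$, one applies a weighted Sobolev inequality to $g:=f_n\langle v\rangle^{k-3/2}$ (so that the gradient weight $\langle v\rangle^{2k-3}$ exactly matches the coercivity weight) and interpolates with the moment $\norm{f_n}_{L^1_m}$, finite for every $m$ by Proposition~\ref{existn}, yielding after Young's inequality
$$\int f_n^3 \langle v\rangle^{2k}\, dv \le \varepsilon\, D + C_{\varepsilon,k}\bigl(\norm{f_n}_{L^2_k}^2 + \norm{f_n}_{L^2_k}^6\bigr).$$
Choosing $\varepsilon$ small enough so that the dissipation absorbs all the gradient pieces yields the advertised ODE.

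To conclude, separation of variables in $y'\le C_k\,y(1+y^2)$ gives
$$\tfrac12\ln\frac{y(t)^2\,(1+y(t_0)^2)}{y(t_0)^2\,(1+y(t)^2)} \le C_k(t-t_0);$$
setting $y(t)^2 = 2\,y(t_0)^2$ and solving for $t-t_0$ produces exactly $(2C_k)^{-1}\ln\bigl(1+\tfrac{1}{1+2\,y(t_0)^2}\bigr)=t_1-t_0$, so by the ODE comparison principle $y(\sigma)\le\sqrt 2\,y(t_0)$ throughout $[t_0,t_1]$, which is the claim. The main obstacle is the cubic term: the Sobolev weight $k-3/2$ is forced by the $\langle v\rangle^{-3}$ of the coercivity, and the interpolation only closes when $k\ge 9/4$, because that is exactly the threshold at which the auxiliary $L^2$-weight produced by the interpolation is dominated by $\norm{f_n}_{L^2_k}$; the emergence of $\norm{f_n}_{L^2_k}^6$ on the right-hand side is what makes the ODE Riccati-type and dictates the precise logarithmic form of $t_1-t_0$.
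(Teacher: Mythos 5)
Your proposal is correct and follows essentially the same route as the paper: the same weighted $L^2_k$ energy identity (your right-hand side $\int f_n^2\bigl(A_n:\nabla^2\langle v\rangle^{2k}+2b_n\cdot\nabla\langle v\rangle^{2k}-\langle v\rangle^{2k}c_n\bigr)dv$ is exactly the paper's, up to the factor $1/2$ normalization), the coercivity \eqref{coer1} feeding a weighted Sobolev inequality for $f_n\langle v\rangle^{k-3/2}$, the $L^6_{k-3/2}$--$L^2_k$ interpolation of the cubic term whose weight condition is precisely $k\ge 9/4$, the resulting differential inequality $Y'\le C_k(Y+Y^3)$, and an explicit integration that reproduces the stated $t_1$ and the constant $\sqrt2$ in \eqref{propaldeux1}; your separation of variables is equivalent to the paper's substitution $g=-\tfrac12 Y^{-2}$. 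The one point where you genuinely deviate is the $b_n$-term: the paper estimates $|b_n[f_n]|\lesssim f_n*|\cdot|^{-2}$, splits into near and far field, and uses Young's convolution inequality ($L^5$ against $L^{5/4}$), whereas you write $b_n=\nabla\cdot A_n$, integrate by parts once more, and absorb the resulting cross term by Cauchy--Schwarz against the full quadratic form $\int\langle v\rangle^{2k}(\nabla f_n)^TA_n\nabla f_n$ before invoking coercivity; this is valid (note it uses the quadratic form itself, not just the lower bound $D$) and is arguably cleaner, avoiding the convolution estimates altogether, at the price of being slightly less modular with respect to Proposition \ref{estabc}. One small imprecision: your phrase about interpolating the cubic term ``with the moment $\norm{f_n}_{L^1_m}$'' cannot be taken literally, since interpolating $L^3$ between $L^6$ and $L^1$ gives an $L^6$-exponent $12/5>2$ that Young cannot absorb into $\varepsilon D$; the closing interpolation must be against $L^2_k$ (as your own final sentence about the $k\ge 9/4$ threshold and the $\norm{f_n}_{L^2_k}^6$ term correctly indicates), so this is a misstatement rather than a gap.
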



\begin{proof}
We proceed as in Lemma 4.1 in \cite{ABDL3} (cf. also \cite{GGL} and \cite {GL}) , and get 
\begin{equation*}
\begin{aligned}
	\frac{d}{dt}& \int_{\R^3} \frac{f_n^2}2\langle v \rangle^{2k} \, dv\\
	 &= -  \int_{\R^3} (\nabla f_n)^T \,A_n[f_n]\, \nabla f_n\langle v \rangle^{2k} \, dv +\, k \int_{\R^3} \nabla \otimes ( v\langle v \rangle^{2k-2} ) : A_n[f_n] \, f_n^2 \\
	 &\quad+ 2k \int_{\R^3} b_n[f_n]\cdot v\langle v \rangle^{2k-2}  \, f_n^2 - \frac12 \int_{\R^3} c_n[f_n]\langle v \rangle^{2k}  \, f_n^2  \\
	 &\quad - \frac{1}{n} \int_{\R^3} \abs{\nabla f_n}^2 \langle v\rangle^{2k} \, dv+ \frac{k}{n} \int_{\R^3} f_n^2  \nabla \cdot ( v\langle v \rangle^{2k-2} ) \, dv
	 \end{aligned} 
	 \end{equation*} 
	 \begin{equation*}
\begin{aligned}
	&\le   - c_0 \int_{\R^3}  |\nabla f_n|^2\langle v \rangle^{2k -3}\, dv \\
	&\quad +  \,  C_k\,   \int_{\R^3} f_n^2\, \Big[ |A_n[f_n]|\langle v \rangle^{2k-2} + |b_n[f_n]|\,\langle v \rangle^{2k-1} + f_n\langle v \rangle^{2k}\Big]\, dv \\
	&\quad + \frac{C_k}{n} \int_{\R^3} f_n^2  \langle v \rangle^{2k-2}  \, dv
	 \end{aligned} 
	 \end{equation*} 
	 \begin{equation*}
\begin{aligned}
	& \le   - \frac{c_0}2 \int_{\R^3}  |\nabla (f_n \, \langle v \rangle^{k -\frac32})|^2\,\, dv  +\, C_k\, \Big( \norm{A_n[f_n]}_{\infty} + 1 \Big) \, \int_{\R^3} f_n^2\langle v \rangle^{2k-2} \, dv\\
	&\quad + C_k \int_{\R^3} f_n^2  \abs{f_n * |\cdot|^{-2} 1_{|\cdot| \le 1} } \, \langle v \rangle^{2k-1} dv \\
	&\quad + \, C_k \int_{\R^3} f_n^2  \abs{f_n * |\cdot|^{-2} 1_{|\cdot| \ge 1} } \langle v \rangle^{2k-1} dv  + C_k  \norm{f_n}_{L^3_{\frac23 k}}^3 \\
	& \le  - \frac{c_0}{2 C_{Sob}^2} \norm{ f_n}_{L^6_{k - \frac32}}^2 + C_k\, \Big( \norm{f_n}_{L^1} + \norm{f_n}_{L^2} + 1\Big)  \norm{f_n}_{L^2_{k - 1}}^2 \\
	&\quad+ \,C_k\, \norm{f_n^2 \langle v \rangle^{2k-1}}_{L^{\frac54}} \norm{  f_n * |\cdot|^{-2} 1_{|\cdot| \le 1} }_{L^5}  + C_k \norm{f_n}_{L^1}   \norm{f_n}_{L^2_{ k- \frac12}}^{2} \\
	&\quad+ C_k\,  \norm{f_n}_{L^6_{ k- \frac32}}^{3/2}\,  \norm{f_n}_{L^2_{ k}}^{3/2},
\end{aligned}
\end{equation*}
using the interpolation 
$$ \norm{g}_{L^3_{k - \frac34}} \le \norm{g}_{L^6_{k - \frac32}}^{1/2}\, \norm{g}_{L^2_{ k}}^{1/2}, $$
and the inequality $k - \frac34 \ge \frac23 k$ when $k \ge \frac94$. Then we use Young's inequality to get
$$  \norm{f_n}_{L^6_{ k- \frac32}}^{3/2}\,  \norm{f_n}_{L^2_{ k}}^{3/2} \le  \frac{c_0}{4 C_{Sob}^2} \norm{ f_n}_{L^6_{k - \frac32}}^2 
 + C_k \, \norm{f_n}_{L^2_{ k}}^{6} , $$
and Young's inequality for convolutions and  conservation of mass, so that
\begin{equation*}
\begin{aligned}
 \frac12\,\frac{d}{dt} \norm{f_n}_{L^2_{ k}}^{2}  \le  &- \frac{c_0}{4 C_{Sob}^2} \norm{ f_n}_{L^6_{k - \frac32}}^2  +C_k\, \Big( 1 + \norm{f_n}_{L^2_k} \Big)\,   \norm{f_n}_{L^2_{k}}^2  \\
& +\, C_k\, \norm{f_n}_{L^{\frac52}_{ k - \frac12}}^{2}\, \norm{f_n}_{L^2} + C_k  \norm{f_n}_{L^2_{ k- \frac12}}^{2} + C_k\,   \norm{f_n}_{L^2_{ k}}^{6} . 
\end{aligned}
\end{equation*}
 At this point, we use the interpolation
$$ \norm{g}_{L^{\frac52}_{k - \frac12}} \le \norm{g}_{L^6_{k - \frac53}}^{3/10}\, \norm{g}_{L^2_{ k}}^{7/10}, $$ 
and observe that $k - \frac53 \le k - \frac32$,
so that using Young's inequality,
\begin{equation*}
\begin{aligned}
 \norm{f_n}_{L^{\frac52}_{ k - \frac12}}^{2}\, \norm{f_n}_{L^2} \le C_k\, \norm{f_n}_{L^6_{k - \frac32}}^{3/5}\, \norm{f_n}_{L^2_{ k}}^{12/5} \le  \frac{c_0}{4 C_{Sob}^2} \norm{ f_n}_{L^6_{k - \frac32}}^2 
 + C_k \, \norm{f_n}_{L^2_{ k}}^{24/7},
 \end{aligned}
\end{equation*}
and denoting $Y :=  \norm{f_n}_{L^2_{ k}}^{2} $, we end up with the differential inequality 
$$ Y' \le C_k \, \Big( Y + Y^{3/2} + Y^{12/7} + Y^3 \Big) \le C_k (Y + Y^3). $$

To solve the above differential inequality, we use a transformation $g := -\frac{1}{2}Y^{-2}$ so that $g$ solves 
$$
g'(\sigma) \le  C_k\,(-2g(\sigma) +1), \quad \quad \sigma >t_0,
$$
and is bounded by 
$$
g(\sigma) \le \frac{1}{2} + e^{-2 C_k(\sigma-t_0)}\left(g(t_0) -\frac{1}{2}\right), \quad \quad \sigma >t_0.
$$
Going back with the transformation from $g$ to $Y$, we recover 
$$
Y^2(\sigma) \le \frac{1}{ e^{-2C_k(\sigma-t_0)} \left( \frac{1}{Y^2(t_0)}+1\right) -1} .
$$
The estimate \eqref{propaldeux1} holds as long as $\sigma-t_0 \le (2C_k)^{-1} \, \ln \left( 1 + \frac{1}{1+ 2  \norm{f_n(t_0, \cdot) }_{L^2_k}^{4}}\right)$.


 \end{proof}

We now introduce an estimate for the derivative of a weighted entropy. Thanks to the  weight, the new entropy production controls the classical Fisher information, up to lower order terms. 

\begin{Prop} \label{appfi}

Let $f_n$ be the solution to \eqref{eqa}, \eqref{in_data_approx} built in Poposition \ref{existn}, for $f_{in}$ with finite mass, entropy and $L^1_{6}$ moment. Let $H_3(f_n | \mathcal M)$ be defined as 
$$H_3(f_n |\mathcal M) := \int_{\R^3} \Big(f_n\,\ln f_n - f_n + f_n \, |v|^2  + \mathcal M(v) \Big) \langle v \rangle^3\, dv. $$
Then there exists a constant $K>0$ depending only  on the mass and energy of $f_{in}$, on $\norm{f_{in} }_{L^1_{6}(\R^3)}$, and on $H(f_{in})$, such that when $ 0 \le s_1 < s_2 \le 1$: 

\begin{equation}\label{estldeux1}
\begin{aligned}
  H_3(f_n | \mathcal M)(s_2) &+ c_0 \int_{s_1}^{s_2} \int_{\R^3}  \frac{ |\nabla f_n(\sigma, v)|^2 }{ f_n(\sigma, v)}\, dv d\sigma\\
  &\le  H_3(f_n | \mathcal M)(s_1) + K \int_{s_1}^{s_2} \left(1 + \norm{f_n(\sigma, \cdot)}_{L^2_2}\right)^3 \, d\sigma .
  \end{aligned}
\end{equation}
   \end{Prop}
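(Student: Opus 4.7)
The plan is to differentiate $H_3(f_n|\mathcal{M})$ along the approximate equation \eqref{eqa}, integrate by parts once, and exploit the coercivity estimate \eqref{coer1} to extract the unweighted Fisher information on the left of \eqref{estldeux1}. The key observation is that $-\ln \mathcal{M}(v) = |v|^2$, so setting $\phi := \ln(f_n/\mathcal{M}) = \ln f_n + |v|^2$ the variational derivative of the integrand of $H_3$ with respect to $f_n$ is exactly $\langle v\rangle^3\phi$. Using \eqref{eqa} and integrating by parts yields
$$\frac{d}{dt} H_3(f_n|\mathcal{M}) = -\int_{\R^3} \nabla\bigl(\langle v\rangle^3\phi\bigr)\cdot \bigl(A_n[f_n]\nabla f_n - b_n[f_n] f_n\bigr)\,dv - \frac{1}{n}\int_{\R^3} \nabla\bigl(\langle v\rangle^3\phi\bigr)\cdot \nabla f_n\,dv.$$

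Decomposing $\nabla(\langle v\rangle^3\phi) = \langle v\rangle^3 \bigl(\nabla f_n/f_n + 2v\bigr) + 3\phi\langle v\rangle v$, the principal contribution is $-\int \langle v\rangle^3 (\nabla f_n)^T A_n[f_n] \nabla f_n / f_n\,dv$. The weight exponent $3$ has been chosen precisely to cancel the degeneracy $\langle v\rangle^{-3}$ in \eqref{coer1}, so coercivity gives
$$-\int_{\R^3} \langle v\rangle^3\,\frac{(\nabla f_n)^T A_n[f_n]\nabla f_n}{f_n}\,dv \le -c_0 \int_{\R^3}\frac{|\nabla f_n|^2}{f_n}\,dv,$$
which is exactly the dissipation appearing in \eqref{estldeux1}. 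The analogous $1/n$ contribution $-(1/n)\int \langle v\rangle^3 |\nabla f_n|^2/f_n\,dv$ has the good sign and can be discarded.

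It then remains to bound the remaining six error terms (the $2v$ and $3\phi\langle v\rangle v$ parts paired with each of $A_n[f_n]\nabla f_n$, $b_n[f_n] f_n$, and $(1/n)\nabla f_n$) in absolute value by $K(1 + \|f_n\|_{L^2_2})^3$. For the $A_n[f_n]$ cross-terms I would use $\|A_n[f_n]\|_\infty \lesssim 1 + \|f_n\|_{L^2}$ from \eqref{esta} and mass conservation; for the $b_n[f_n]$ cross-terms I would use the pointwise bound $|b_n[f_n]| \le C|f_n|\ast|\cdot|^{-2}$ from Proposition \ref{estabc} combined with Hardy--Littlewood--Sobolev to recast the convolution as a weighted $L^p$ norm of $f_n$. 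Any surviving $\nabla f_n$ factor is absorbed into the Fisher dissipation via Young's inequality $ab \le \varepsilon a^2 + C_\varepsilon b^2$, applied to $|\nabla f_n|/\sqrt{f_n}$ on one side, at the cost of a weighted $L^p$ norm of $f_n$ on the other side. Interpolating between $\|f_n\|_{L^1_k}$ (uniformly bounded by Proposition \ref{existn}) and $\|f_n\|_{L^2_2}$ produces the polynomial dependence of degree $\le 3$ on the right of \eqref{estldeux1}.

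The main obstacle is the logarithmic factor $\phi$ appearing in the $3\phi\langle v\rangle v$ pieces. When $\phi$ is paired with $f_n$, I would split $f_n\phi = f_n\ln_+ f_n + f_n|v|^2 - f_n\ln_- f_n$ and use: (i) the pointwise bound $f_n\ln_+ f_n \cdot \langle v\rangle^3 \le f_n^2\langle v\rangle^3 + C$, yielding $\|f_n\|_{L^2_2}^2$; (ii) the elementary estimate $\int f_n\ln_-f_n\langle v\rangle^3 \le \int (f_n + e^{-|v|^2})|v|^2\langle v\rangle^3\,dv$, bounded by the $L^1_6$ moment assumed in the statement. When $\phi$ is paired with $\nabla f_n$, I would first integrate by parts using $f_n\nabla\phi = \nabla f_n + 2v f_n$ to transfer the singular piece onto the smooth weight, reducing matters to terms of the previous type. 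Collecting everything and integrating from $s_1$ to $s_2$ gives \eqref{estldeux1}; the bookkeeping of weights to achieve exactly exponent $3$ (and no larger) in $(1+\|f_n\|_{L^2_2})^3$ is the only truly delicate point.
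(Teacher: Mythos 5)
Your overall skeleton coincides with the paper's: differentiate $H_3(f_n|\mathcal M)$ along \eqref{eqa}, use the weight $\langle v\rangle^3$ precisely to cancel the degeneracy in \eqref{coer1} and retain the unweighted Fisher information, discard the good-signed $\frac1n\int \langle v\rangle^3 |\nabla f_n|^2/f_n$ piece, and control the remainder through the $L^\infty$ bound on $A_n$, the convolution bound on $b_n$, the elementary logarithm inequalities and interpolation against the $L^1_6$ moment. However, two points in your treatment of the error terms are genuine gaps.

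First, your list of ``six error terms'' (the $2v$ and $3\phi\langle v\rangle v$ parts) omits the pairing of $\langle v\rangle^3\nabla f_n/f_n$ with the drift part $b_n[f_n]f_n$, i.e.\ $\int_{\R^3}\langle v\rangle^3\, b_n[f_n]\cdot\nabla f_n\,dv$; this term belongs to neither class and is exactly the one that, after an integration by parts, produces the $c_n[f_n]$ contribution handled in \eqref{e2}. Second, and more seriously, your strategy of absorbing every surviving $\nabla f_n$ into the dissipation via Young's inequality cannot stay within the claimed norms: the coercivity \eqref{coer1} only leaves you the \emph{unweighted} Fisher information, so absorbing, say, $\langle v\rangle^3 v\cdot A_n[f_n]\nabla f_n$ or $\langle v\rangle^3 b_n[f_n]\cdot\nabla f_n$ costs respectively $C\norm{A_n[f_n]}_\infty^2\int f_n\langle v\rangle^{8}\,dv$ and $C\int f_n\,|b_n[f_n]|^2\langle v\rangle^{6}\,dv$; the first requires an $L^1_8$ moment, and the near-singularity part of the second is of order $\norm{f_n}_{L^2}^2\norm{f_n}_{L^2_6}$, neither of which is controlled by $K(1+\norm{f_n}_{L^2_2})^3$ with a constant depending only on the $L^1_6$ moment, mass, energy and entropy. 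The paper avoids this by never leaving a gradient in the error terms: it writes $\ln f_n\,\nabla f_n=\nabla(f_n\ln f_n-f_n)$ and integrates by parts once more, using $\nabla\cdot(A_n[f_n]\Psi)=A_n[f_n]:\nabla\otimes\Psi+b_n[f_n]\cdot\Psi$ for smooth $\Psi$, so that the remainders involve only $f_n$ and $f_n\ln f_n$ tested against $A_n$, $b_n$, $c_n$ with low weights, as in \eqref{e2}--\eqref{e7}. Your own suggestion for the $\phi$-paired pieces is essentially this trick; to obtain the statement as claimed you must apply it also to the $2v\langle v\rangle^3$ and $b_n$ gradient pairings instead of Young's inequality.
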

\begin{proof}
We first observe that (similarly to the computation of \cite{ABDL2}, beginning of the proof of Prop. 5.5, cf. also \cite{ABL}) 
\begin{equation*}
\begin{aligned}
\frac{d}{dt} H_3(f_n | \mathcal M) = -&  \int_{\R^3}\langle v \rangle^3\, (\nabla f_n)^T A_n[f_n] \frac{\nabla f_n}{f_n} \, dv 
-   \int_{\R^3}\langle v \rangle^3 \, c_n[f_n] \, f_n\, dv \\
& +\, 3 \int_{\R^3} A_n[f_n] : \nabla \otimes (v\langle v \rangle) \, (f_n \ln f_n - f_n) \, dv \\
& +\, 6 \int_{\R^3} b_n[f_n] \cdot v\langle v \rangle \, (f_n \ln f_n - f_n) \, dv \\
& +\, 2 \int_{\R^3} f_n\, b_n[f_n] \cdot \nabla (\langle v \rangle^3 |v|^2) \, dv \\
&+  \int_{\R^3} f_n\, A_n[f_n] : \nabla \otimes \nabla (\langle v \rangle^3 |v|^2) \, dv \\
&+ \frac{1}{n} \int_{\R^3} \Delta f_n \left(\ln f_n + \abs{v}^2 \right) \langle v \rangle^3 \, dv.
\end{aligned}
\end{equation*}
Thanks to the coercivity estimate \eqref{coer1}, we know that
\begin{equation}\label{e1}
 \int_{\R^3}\langle v \rangle^3 (\nabla f_n)^T A_n[f_n] \frac{\nabla f_n}{f_n} \, dv \ge c_0 \int_{\R^3} \frac{|\nabla f_n|^2}{f_n}\, dv . 
\end{equation}
Then systematically using the estimates of Proposition \ref{estabc}, we get first
\begin{equation}\label{e2}
 \abs{ \int_{\R^3}\langle v \rangle^3 \, c_n[f_n] \, f_n\, dv  } \le C  \int_{\R^3}\langle v \rangle^3 \, f_n^2\, dv \le  C\, \norm{f_n}_{L^2_{3/2} }^2;
\end{equation}
second,
\begin{equation}\label{e3}
\begin{aligned}
&\abs{\int_{\R^3} A_n[f_n] : \nabla \otimes ( v\langle v \rangle) \, (f_n \ln f_n - f_n) \, dv } \\
&\quad\le C\, \norm{A_n[f_n] }_{\infty} \int_{\R^3} \langle v\rangle  \Big[ f_n (\ln f_n)^+ + f_n\langle v\rangle^2 + e^{-1} \mathcal M(v) \langle v\rangle^2 + f_n\Big] \, dv \\
&\quad\le  C\, (  \norm{f_n}_{L^1} +  \norm{f_n}_{L^2}) \big(   \norm{f_n}_{L^2_{1/2}}^2 +  \norm{f_n}_{L^1_3} + 1\big) , 
\end{aligned}
\end{equation}
where we used the elementary inequalities (for $x>0, v\in \R^3$) 
\begin{equation}\label{ei}
 x |\ln x| \le  x\, (\ln x)^+  + x\langle v \rangle^2 +   e^{-1} \mathcal M(v) \langle v \rangle^2  , \qquad (\ln x)^+ \le x ;
\end{equation}
third, we get
\begin{equation}\label{e4}
\begin{aligned}
&\abs{\int_{\R^3} b_n[f_n] \cdot v\langle v \rangle \, (f_n \ln f_n - f_n) \, dv } \\
 &\quad\le C \int_{\R^3}  
\langle v \rangle^2 \, (f_n * |\cdot|^{-2} 1_{|\cdot| \le 1} ) \, (f_n | \ln f_n| + f_n ) \, dv \\
&\quad\quad+\,  C \int_{\R^3}  
\langle v \rangle^2 \, (f_n * |\cdot|^{-2} 1_{|\cdot| \ge 1} ) \, (f_n | \ln f_n| + f_n ) \, dv  \\
&   \quad\le C \norm{ f_n * |\cdot|^{-2} 1_{|\cdot| \le 1}}_{L^4}  \norm{ \big[f_n (\ln f_n)^+ + f_n\langle v \rangle^2 + e^{-1} M(v) \langle v \rangle^2 + f_n \big] \langle v \rangle^2 }_{L^{4/3}} \\
&\quad \quad+ \,C \,  \norm{ f_n  }_{L^1}\, \int_{\R^3}  
\langle v \rangle^2 \,  (f_n | \ln f_n| + f_n ) \, dv  \\
& \quad\le  C \norm{ f_n  }_{L^2}\Big( \norm{[f_n^{4/3} [(\ln f_n)^+]^{4/3}\langle v \rangle^{8/3} + f_n^{4/3}\langle v \rangle^{16/3}}_{L^{1}}^{3/4} + 1 \Big) \\
&\quad \quad+ C   \norm{ f_n  }_{L^1}\norm{[f_n (\ln f_n)^+ + f_n\langle v \rangle^2 + e^{-1} \mathcal M(v) \langle v \rangle^2 + f_n ] \langle v \rangle^2}_{L^1} \\
&\quad \le  C \norm{ f_n  }_{L^2} \Big( \norm{ f_n}_{L^{2}_{4/3}}^{3/2} +  \norm{f_n}_{L^2_2}^{1/2}  \norm{f_n}_{L^1_6}^{1/2}  + 1 \Big) \\
&\quad\quad + C   \norm{ f_n  }_{L^1} \Big(\norm{f_n}_{L^2_{1}}^2 +  \norm{f_n}_{L^1_4} + 1\Big),
\end{aligned}
\end{equation}
where we used Young's inequality for convolutions, the elementary inequalities \eqref{ei} and $(\ln x)^+ \le C x^{1/2}$, together with the
interpolation inequality
\begin{equation}\label{eii}  \norm{g}_{L_4^{4/3}} \le  \norm{g}_{L^2_2}^{1/2}\,  \norm{g}_{L^1_6}^{1/2} ; 
\end{equation}
fourth, we obtain
\begin{equation}\label{e5}
\begin{aligned}
 \abs{ \int_{\R^3} f_n\, A_n[f_n] : \nabla \otimes \nabla (\langle v \rangle^3 |v|^2) \, dv } &\le C  \int_{\R^3} f_n\, |A_n[f_n]|  
\, \langle v \rangle^3\, dv \\
&\le   C\big (  \norm{f_n}_{L^1} +  \norm{f_n}_{L^2}\big)    \norm{ f_n  }_{L^1_3} ; 
\end{aligned}
\end{equation}
and fifth (using again interpolation \eqref{eii} in the last inequality)
\begin{equation}\label{e6}
\begin{aligned}
& \abs{ \int_{\R^3} f_n\, b_n[f_n] \cdot \nabla (\langle v \rangle^3 |v|^2) \, dv }  \\
 &\qquad\le C  \int_{\R^3} f_n\, |b_n[f_n]|\langle v \rangle^4\, dv \\
& \qquad\le  C \int_{\R^3}  
\langle v \rangle^4 \, (f_n * |\cdot|^{-2} 1_{|\cdot| \le 1} ) \, f_n  \, dv  +  C \int_{\R^3}  
\langle v \rangle^4 \, (f_n * |\cdot|^{-2} 1_{|\cdot| \ge 1} ) \, f_n  \, dv  \\ 
& \qquad  \le C\, \norm{ f_n * |\cdot|^{-2} 1_{|\cdot| \le 1} }_{L^4}  \,  \norm{ f_n \langle v \rangle^4 }_{L^{4/3}} +  C\,  \norm{ f_n  }_{L^1}\norm{f_n}_{L^1_4}  \\
  &\qquad\le C\, \norm{ f_n }_{L^2}  \,  \norm{ f_n}_{L^{1}_6}^{1/2}\,  \norm{f_n}_{L^2_2}^{1/2}  +  C\,  \norm{ f_n  }_{L^1}\, \norm{f_n}_{L^1_4} .
  \end{aligned}
\end{equation}
Finally, the last term is bounded by
\begin{equation}\label{e7}
\begin{aligned}
	 \frac{1}{n} \int_{\R^3}& \Delta f_n \left(\ln f_n + \abs{v}^2 \right) \langle v \rangle^3 \, dv \\
	 &\leq \frac{1}{n} \int_{\R^3}\left[  |f_n \ln f_n -f_n|\, |\Delta\langle v \rangle^3 | + f_n |\Delta \left(\langle v \rangle^3\abs{v}^2\right) | \right] \, dv\\
	 &\leq \frac{1}{n}\left( \norm{f_n}_{L^2_{1/2}}^2 +  \norm{f_n}_{L^1_3} \right).
  \end{aligned}
\end{equation}
Regrouping estimates \eqref{e1} to \eqref{e7}, using estimate (\ref{propamom1}) in Proposition \ref{existn} and integrating in $(s_1,s_2)$, we get estimate \eqref{estldeux1}. 
\end{proof}

Thanks to Theorem 1.1 in \cite{GS}, we know that the Fisher information for $f_n$  is bounded (more explicitly monotone decreasing in time). This bound, however, depends on the Fisher information of the initial data and will be  lost in the limit $n\to +\infty$. The next proposition shows that at  time $\min(t_1,t)$ (depending on $n$), the Fisher information is finite and not depending on $n$. Thanks to its monotonicity, this uniform bound will hold for all times larger than $\min(t_1,t)$, and therefore for all times larger than $t$. 



\begin{Prop} \label{mainprop}
Let $f_{in}:=f_{in}(v)\ge 0$ be as in Theorem \ref{main-Thm}. For any $t \in ]0, 1]$, the solution $f_n$ to the approximated Landau-Coulomb equation \eqref{eqa}, \eqref{in_data_approx}  built in Proposition \ref{existn} satisfies
\begin{equation}\label{estiprin}
 \sup_{ \sigma \in [t,+\infty)} \int_{\R^3}  \frac{ |\nabla f_n(\sigma,v)|^2 }{ f_n(\sigma,v)}\, dv \le C'\,  t^{-9/2},
 \end{equation}
where $C'$ depend only on  the mass, energy and entropy of $f_{in}$, and on $||f_{in}||_{L^1_{{{18}}}}$.
\end{Prop}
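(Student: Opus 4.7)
The plan is to chain Propositions \ref{estldeux}, \ref{propaldeux}, \ref{appfi} together and close with the monotonicity of the Fisher information from Proposition \ref{existn}. Fix $t \in (0, 1]$ and pick $k = 9/4$, so that the moment constraint $L^1_{9+4k} = L^1_{18}$ matches the hypothesis of Theorem \ref{main-Thm}. Proposition \ref{estldeux} delivers a (possibly $n$-dependent) time $t_0 \in [0, t/2]$ satisfying $\norm{f_n(t_0)}_{L^2_{9/4}}^2 \le C^*\, t^{-3/2}$, and Proposition \ref{propaldeux} then propagates this bound (up to a factor $\sqrt 2$) on an interval $[t_0, t_1]$ of length $t_1 - t_0 \gtrsim \min(1, t^3)$. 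In particular $\sup_{\sigma \in [t_0, t_1]} \norm{f_n(\sigma)}_{L^2_2} \lesssim t^{-3/4}$, since $L^2_{9/4} \hookrightarrow L^2_2$.

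I then apply Proposition \ref{appfi} on $[t_0, s_2]$, with $s_2 := \min(t_1, t) \le 1$, to get
\begin{equation*}
c_0 \int_{t_0}^{s_2} \int_{\R^3} \frac{|\nabla f_n(\sigma, v)|^2}{f_n(\sigma, v)}\, dv\, d\sigma \le H_3(f_n | \mathcal M)(t_0) + K (s_2 - t_0) \bigl( 1 + C\, t^{-3/4} \bigr)^3,
\end{equation*}
and a mean value argument on $[t_0, s_2]$ produces $\tau \in [t_0, s_2] \subseteq [0, t]$ at which the Fisher information is bounded by this right-hand side divided by $c_0 (s_2 - t_0) \gtrsim t^3$. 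Controlling $H_3(f_n | \mathcal M)(t_0)$ amounts to handling three pieces: the $f_n |v|^2 \langle v\rangle^3$ and $\mathcal M\langle v\rangle^3$ terms are absorbed by the uniform moment bound \eqref{propamom1} of Proposition \ref{existn}, while the $f_n \ln f_n \langle v\rangle^3$ term is treated with the elementary inequalities \eqref{ei} together with $(\ln x)^+ \le x$, reducing it to $\norm{f_n(t_0)}_{L^2_{3/2}}^2 \lesssim t^{-3/2}$. Consequently $H_3(f_n | \mathcal M)(t_0) \le C(1 + t^{-3/2})$, and the Fisher information at $\tau$ is bounded by $C'\, t^{-9/2}$ (the term $(1 + C t^{-3/4})^3 \lesssim t^{-9/4}$ is subleading). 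Finally, the monotonicity of the Fisher information of $f_n$ in time (Proposition \ref{existn}) transfers the bound from $\tau \le t$ to every $\sigma \ge t$, yielding \eqref{estiprin}.

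The heart of the matter, and the main obstacle to be handled, is a two-scale interplay: the $L^2_{9/4}$ propagation window supplied by Proposition \ref{propaldeux} shrinks like $t^3$, whereas the weighted entropy $H_3(f_n | \mathcal M)(t_0)$ blows up like $t^{-3/2}$, and it is precisely their ratio that pins down the rate $t^{-9/2}$. The careful choice $k = 9/4$ is forced by a double constraint: $k$ must be at least $9/4$ to invoke Proposition \ref{propaldeux}, yet small enough that $L^1_{9+4k} = L^1_{18}$ remains within the hypotheses of the theorem.
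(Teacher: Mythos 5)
Your proposal is correct and follows essentially the same route as the paper: choose $k=9/4$, combine Propositions \ref{estldeux}, \ref{propaldeux} and \ref{appfi} on $[t_0,\min(t_1,t)]$, bound $H_3(f_n|\mathcal M)(t_0)$ by $C(1+\norm{f_n(t_0)}_{L^2_{3/2}}^2)\lesssim t^{-3/2}$, use $\min(t_1,t)-t_0\gtrsim t^3$, and conclude with the monotonicity of the Fisher information. The only cosmetic difference is that you locate a good time $\tau$ by a mean value argument (and implicitly use $H_3\ge 0$ to drop the term at $s_2$), whereas the paper bounds the Fisher information directly at the endpoint $\tau=\min(t_1,t)$ via its monotonicity; both give the same $t^{-9/2}$ rate.
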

\begin{proof}
We first observe that (for any $g\ge 0$ such that the quantity exists)
\begin{equation}\label{eh1}
 H_3(g|\mathcal M) =  \int_{\R^3} \bigg[ \frac{g}{\mathcal M} \ln \bigg( \frac{g}{\mathcal M} \bigg) - \frac{g}{\mathcal M} + 1 \bigg] \mathcal M\langle v \rangle^3\, dv \ge 0, 
\end{equation}
and (using both inequalities of (\ref{ei}))
\begin{equation}\label{eh2}
\begin{aligned}
 H_3(g|\mathcal M) &\le  \int_{\R^3}  [\, g |\ln g| +  g\langle v \rangle^2 + \mathcal M(v) ] \langle v \rangle^3 \, dv \\
&\le C  \int_{\R^3}  [ g (\ln g)^+ +  g\langle v \rangle^2 + \mathcal M(v)\langle v \rangle^2  ] \langle v \rangle^3 \, dv  \\
&\le C \, (\norm{ g }_{L^2_{3/2}}^2  +  \norm{ g}_{L^{1}_5} + 1) .
\end{aligned}
\end{equation}
Let $t_0$ and $t_1$ be as in Propositions \ref{estldeux} and \ref{propaldeux} with $k= 9/4$.  We use \eqref{estldeux1} with $s_1 = t_0$ and $s_2 = \min(t_1,t)$. We get 
 \begin{equation*}
\begin{aligned}
 	H_3(f_n|M)(\min(t_1,t)) &+   c_0\, \int_{t_0}^{\min(t_1,t)} \int_{\R^3}  \frac{ |\nabla f_n(s,v)|^2 }{ f_n(s,v)}\, dv ds\\
		&\le  H_3(f_n|M)(t_0) +   K\, (\min(t_1,t) - t_0)\,  \sup_{s \in [t_0, \min(t_1,t)]}  \left(1 + \norm{f_n(s, \cdot)}_{L^2_2}\right)^3 . 
\end{aligned}
\end{equation*}
{{Thanks to the monotonicity of the Fisher information \cite[Theorem 1.1]{GS}, using estimates \eqref{eh1} and \eqref{eh2}, as well as \eqref{propamom1} from Proposition \ref{existn}, we see that 
 \begin{equation*}
\begin{aligned}
{ c_0} \int_{\R^3}&  \frac{ |\nabla f_n(\tau,v)|^2 }{ f_n(\tau,v)}\, dv ds\\
&  \le  \frac{C}{\min(t_1,t) - t_0} \,(1+ \norm{f_n(t_0, \cdot)}_{L^2_{3/2}}^2 ) +   K\,   \sup_{s \in [t_0, \min(t_1,t)]}  (1 + \norm{f_n(s, \cdot)}_{L^2_2})^3  \\
&  \le  \frac{C}{\min(t_1,t) - t_0} \,(1+ \norm{f_n(t_0, \cdot)}_{L^2_{9/4}}^2 ) +   K\,   \sup_{s \in [t_0, \min(t_1,t)]}  (1 + \norm{f_n(s, \cdot)}_{L^2_{9/4}})^3 ,
\end{aligned}
\end{equation*}
for $\tau = \min(t_1,t)$. 
}}
We now use Proposition \ref{estldeux} (with {{$k= 9/4$, so that $4 k + 9 = 18$}}), and get
 $$\norm{f_n(t_0, \cdot)}_{L^2_{9/4}} \le C_{9/4}^*\, t^{- 3/4}.$$
Then Proposition \ref{propaldeux} (applied with $k= 9/4$) yields 
\begin{align}\label{L^2_n}
 \sup_{s \in [t_0, \min(t_1,t)]} \norm{f_n(s, \cdot)}_{L^2_{9/4}} \le 2^{1/4}\, \norm{f_n(t_0, \cdot)}_{L^2_{9/4}} \le 2^{1/4}\, C_{9/4}^*\, t^{-3/4},
 \end{align}
and  we find 
 \begin{equation*}
\begin{aligned}
	\int_{\R^3}  \frac{ |\nabla f_n(\tau,v)|^2 }{ f_n(\tau,v)}\, dv &\le  \frac{1}{\min(t_1,t) - t_0}\, \int_{t_0}^{\min(t_1,t)} \int_{\R^3}  \frac{ |\nabla f_n(s,v)|^2 }{ f_n(s,v)}\, dv ds\\
& \le \frac{C}{\min(t_1,t) - t_0} \, \left(1+ (C_{9/4}^*\, t^{- 3/4})^2  \right) +   C  \left(1 + 2^{1/4}\, C_{9/4}^*\, t^{-3/4}\right)^3 .
\end{aligned}
\end{equation*}
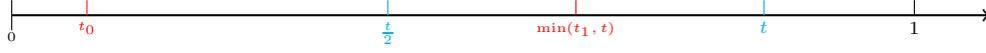
\begin{figure}
\begin{tikzpicture}[scale=1]
  \draw[thick,black, ->] (0, 0)  -- (13, 0) ;
  \draw (0,-0.2) -- (0, 0.2) node[below=9pt] {\textcolor{black}{\tiny{$ 0 $}}};
  \draw[red] (1,0) -- (1, 0.2)  node[below=5pt] {\textcolor{red}{\tiny{$ t_0 $}}};
    \draw[red] (7.5,0) -- (7.5, 0.2)  node[below=5pt] {\textcolor{red}{\tiny{$\min(t_1,t)$}}};
   \draw[cyan] (5,0) -- (5, 0.2) node[below=5pt] {\footnotesize{\textcolor{cyan}{$\frac{t}{2}$}}};
    \draw[cyan] (10,0) -- (10, 0.2) node[below=5pt] {\footnotesize{\textcolor{cyan}{$t$}}};
   \draw (12,0) -- (12, 0.2) node[below=5pt]{\footnotesize{$1$}};
\end{tikzpicture}
\caption{The timeline. Note that $t$ is independent of $n$, whereas $t_0$ and $t_1$ depend on $n$.}
\end{figure}
We note that (remember that $t \in ]0,1]$)
\begin{align*}
t_1 - t_0 
&= (2C_{9/4})^{-1} \, \ln \left( 1 + \frac{1}{1+ 2  \norm{f_n(t_0, \cdot) }_{L^2_{9/4}}^{4}}\right) \\
& \ge \left\{ (2C_{9/4})^{-1} \, \ln \left[ 1 + \frac{1}{1+ 2 \left(C_{9/4}^*\, t^{- 3/4}\right)^4}\right]\right\}  \ge D^{-1}\,{\ln \left(1 + \frac{t^3}{t^3 +C}\right)}\\
& \ge D^{-1}\,\ln \left(1 + \frac{t^3}{1 +C}\right)  \ge D^{-1}\,t^3,
\end{align*}
(where $C, D$ are constants regrouping previous constants),
and (since $t - t_0 \ge \frac{t}2$)
$$ \min(t_1,t) - t_0 \ge \min( D^{-1}\,t^3 , \frac{t}2) \ge  D^{-1}\,t^3, $$
so that
$$  \int_{\R^3}  \frac{ |\nabla f_n(\tau,v)|^2 }{ f_n(\tau,v)}\, dv 
\leq \frac{K}{t^3}  (1 + K\, t^{-3/2}) + K\, (1 + K \,t^{-3/4})^3 \le K\,t^{- 9/2}.$$
Thanks to the monotonicity of the Fisher information \cite[Theorem 1.1]{GS}, we then conclude that 
\begin{align}\label{mono_Fish_n}
\sup_{ \sigma \in [\min (t_1,t), +\infty)}  \int_{\R^3}  \frac{ |\nabla f_n(\sigma,v)|^2 }{ f_n(\sigma, v)}\, dv \le   K \,t^{-9/2},
\end{align}
which implies \eqref{estiprin}.

\end{proof}

\subsection{The limit}


We now finish the proof of Theorem \ref{main-Thm} by taking the limit $n\to +\infty$. We restate here Theorem \ref{main-Thm}  in a slightly more precise way, presenting the
\begin{Thm} \label{mainthm}
Let $f_{in}:=f_{in}(v)\ge 0$ lying in $L^1_{18}(\R^3)$ be such that   $H(f_{in})$ is finite. Then, there exists a (global in time) $H$-solution (also weak solution in the sense given by \cite{FPL}) $f$ to the Landau-Coulomb equation \eqref{Landau_orig}
 with initial datum $f_{in}$.
 Moreover this solution is strong, and belongs to $C^{\infty} \cap L^1_{18}(\R^3)$) on any time interval $(0, +\infty)$.
Finally, $f$ satisfies for all $t>0$  the bound: 
$$ \sup_{ \sigma \in [t, +\infty)}  \int_{\R^3}  \frac{ |\nabla f(\sigma,v)|^2 }{ f(\sigma, v)}\, dv \le   C' \,t^{-9/2} , $$
where $C'$ depends only on the mass and energy of $f_{in}$, $H(f_{in})$ and $\norm{f_{in}}_{L^1_{18}}$.
\end{Thm}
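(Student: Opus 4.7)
The plan is to combine the uniform-in-$n$ estimates already produced on the regularized sequence $\{f_n\}$ of Propositions \ref{existn} through \ref{mainprop}, pass to the limit $n \to +\infty$, and then upgrade the regularity of the limit by invoking the smoothness theory for $L^p$-initial data of \cite{GGL}.

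First, I would assemble the uniform bounds. Proposition \ref{existn} gives uniform bounds on mass, momentum, energy, entropy, and moments $\|f_n\|_{L^1_k}$ for $k \leq 18$ on any finite time interval, while Proposition \ref{mainprop} provides, for every $t \in (0,1]$ and every $\sigma \geq t$, the uniform Fisher information bound $\int |\nabla f_n(\sigma,v)|^2/f_n(\sigma,v)\,dv \leq C' t^{-9/2}$. Combined with the $L^1$ mass bound and the Sobolev embedding $H^1 \hookrightarrow L^6$ applied to $\sqrt{f_n}$, this yields a uniform $L^\infty_{loc}((0,+\infty); L^3(\R^3))$ bound on $f_n$, and also a uniform $H^1_{loc}$ bound on $\sqrt{f_n}$ in velocity.

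Next, I would extract a convergent subsequence. Using the equation \eqref{eqa} together with the uniform bounds of Proposition \ref{estabc} on $A_n[f_n]$, $b_n[f_n]$, $c_n[f_n]$, the time derivative $\partial_t f_n$ is bounded in a suitable negative-order Sobolev space locally in time. An Aubin-Lions argument, together with the moment bound \eqref{propamom1} to control tails in velocity, yields strong compactness of $f_n$ in $L^1_{loc}((0,+\infty) \times \R^3; \langle v\rangle^{18}\,dv)$. A diagonal extraction gives $f_n \to f$ a.e.\ and in these topologies. Strong convergence of $f_n$ combined with weak convergence of $\nabla f_n$ suffices to pass to the limit in the quadratic nonlocal operator in the weak formulation, identifying $f$ as an $H$-solution (and weak solution in the sense of \cite{FPL}); the entropy identity \eqref{entriden} and the uniform entropy bound \eqref{entropy_n_unif} recover the initial datum at $t = 0$.

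The Fisher information bound \eqref{estiprin} transfers to the limit by lower semi-continuity of $g \mapsto \int |\nabla \sqrt{g}|^2\,dv$ under $L^1$-convergence: for a.e.\ $\sigma \geq t$,
\begin{equation*}
\int_{\R^3} \frac{|\nabla f(\sigma,v)|^2}{f(\sigma,v)}\,dv \;\leq\; \liminf_{n\to\infty} \int_{\R^3} \frac{|\nabla f_n(\sigma,v)|^2}{f_n(\sigma,v)}\,dv \;\leq\; C'\,t^{-9/2}.
\end{equation*}
Since this forces $f(\sigma,\cdot) \in L^3(\R^3)$ for every $\sigma > 0$, the results of \cite{GGL} applied starting from any time $\sigma_0 > 0$ promote $f$ to a smooth strong solution on $(\sigma_0, +\infty) \times \R^3$, hence on all of $(0, +\infty) \times \R^3$. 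The main technical obstacle I expect is the careful passage to the limit in the nonlocal quadratic term, in particular ensuring strong convergence over the high-velocity region where the weight $\langle v\rangle^{18}$ is needed to control tails of the singular kernels $K_n \Pi$; this step is standard in this setting but must rely on the moment propagation of Proposition \ref{existn} and the compactness arguments of \cite{FPL, GGL}.
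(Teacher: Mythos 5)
Your proposal follows essentially the same route as the paper: uniform estimates from Propositions \ref{existn} and \ref{mainprop}, compactness via Aubin--Lions to extract an $H$-solution limit (the paper refers to \cite{V98}, Section 6, for this step), Fatou's lemma / lower semicontinuity to transfer the Fisher information bound, and then the known conditional regularity theory (the paper cites \cite{S17, ABDL3, GG29}, you cite \cite{GGL}; both amount to the same bootstrap from the $L^3$ bound implied by the Fisher information) to conclude smoothness and strong solvability on $(0,+\infty)$. The argument is correct and matches the paper's proof in structure and level of detail.
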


\begin{proof}
Up to subsequences, the sequence of solutions $f_n$ found in Proposition  \ref{existn} converges, as $n \to +\infty$ to a $H$-solution $f$ to the Landau-Coulomb equation (see \cite{V98} Section 6 for details), which satisfies the following weak formulation 
\begin{align*}
&\int f_{in} \varphi(\cdot,0) \;dv + \int_0^T \int f \partial_t \varphi \;dvds \\
&= \int_0^T \iint \left( \frac{f(v)f(w)}{|v-w|}\right)^{1/2} (\nabla_v \varphi - \nabla_w \varphi) \cdot \Pi(v-w)(\nabla_v - \nabla_w)\left( \frac{f(v)f(w)}{|v-w|}\right)^{1/2} \;dvdwds ,
\end{align*}
for all test functions $\varphi \in C^2_0([0,T)\times \mathbb{R}^3)$. 
This $H$-solution is also a weak solution in the sense of Corollary 11 in \cite{FPL}. 
Moreover $f_n \to f$ almost everywhere in time and velocity thanks to Aubin-Lions lemma, $f\in C_w([0,T], L^1(\R^3))$, and, 
thanks to Fatou lemma, using (\ref{estiprin}) (and the link between $ \frac{ |\nabla f_n|^2 }{ f_n}$ and $|\nabla \sqrt{f_n}|^2$),
that is (for $t>0$)
\begin{equation*}
 \sup_{[t,+\infty)} \int_{\R^3}  { |\nabla \sqrt{f_n}|^2 }\, dv \le C'\,t^{-9/2},
 \end{equation*}
we see that
 \begin{align*}
  \sup_{[t,+\infty)} \int_{\R^3}  { |\nabla \sqrt{f}|^2 }\, dv \le C'\, t^{-9/2}.
 \end{align*}
At this point, classical arguments imply that $f$ is bounded for all times greater than $t$ and therefore smooth \cite{S17, ABDL3, GG29} (and satisfies the equation in a strong sense).
\end{proof}

\appendix

\section{Coercivity for the truncated kernel}\label{appendix}

{\em{Proof of \eqref{coer1}}}:  Let $f$ be a function with bounded mass, second moment and entropy. Following the same computations as in Lemma 3.1 in \cite{S17}, we can show that there exists $\ell, \; R, \; \mu$ not depending on $n$ such that the measure of the set 
$$
B:= \{ v\in B_R \; | \; f \ge \ell \} 
$$
is smaller (or equal) than $\mu$. 
Let us now show \eqref{coer1}; since $K_n \le K_{n+1}$, it is enough to show the lower bound for $K_1$. Let $e$ be an unitary vector. We have 
$$
A_1[g](v) : e \otimes e = \int f(v-w)K_1(w) ( 1 - \cos ^2 \theta)dw,
$$
where $\theta$ is the angle between $w$ and $e$. Hence 
$$
A_1[g](v) : e \otimes e \ge   \ell \int_{ \{w-v \in B_R \} \cap \{f(v-w)\ge \ell\} }  K_1(w) ( 1 - \cos ^2 \theta)dw,
$$

Since $( 1 - \cos ^2 \theta) \sim 0$ when $w$ is parallel to $e$, and since $ w$ belongs to a subset of $B_R+v$, the smallest value for $A_1[g](v) : e \otimes e $ is obtained when $e = \frac{v}{|v|}$. Let $v$ be in a neighborhood of the origin. Then one can find a cone with center at the origin and aperture $\varepsilon$ in direction $e = \frac{v}{|v|}$ such that the intersection of this cone with the set $\{B+v\}$ has measure smaller than $\mu/4$. In this set $( 1 - \cos ^2 \theta) \ge \varepsilon^2$ and 
$$
A_1[g](v) : e \otimes e \ge  \ell \int_{\{w-v \in B_R  \} \cap \{f(v-w)\ge \ell\}\cap \textrm{cone}_\varepsilon} K_1(w) ( 1 - \cos ^2 \theta)dw \ge c(\ell,R,\mu).
$$
If $v$ is away from the origin, then $K(w) = \frac{1}{|w|}$ in $\{B+v \} $ and 
$$
A_1[g](v) : e \otimes e \ge  \ell \int_{\{w-v \in B_R \} \cap \{f(v-w)\ge \ell\}\cap \textrm{cone}_\varepsilon} \frac{1}{|w|} ( 1 - \cos ^2 \theta)dw \ge c(\ell,R,\mu) \ge \frac{c_0}{(1+ |v|^3)},
$$
as in the classical case. 

\bigskip

\noindent




%


\begin{thebibliography}{10}


\bibitem{AB}
A. A. Arsenev, O. E. Buryak. 
\newblock On the connection between a solution of the Boltzmann equation and a solution of the Landau-Fokker-Planck equation.
 \newblock{\em Math. USSR Sb. 69 (1991), no. 465. }
 
 

%
 
\bibitem{ABDL2}
 R. Alonso, V. Bagland, L. Desvillettes, and B. Lods: 
\newblock About the Landau-Fermi-Dirac equation with moderately soft potentials.
\newblock
{\em Arch. Rational Mech. Anal.,} {\bf{244}}, (2022), 779--875.

\bibitem{ABDL3}
 R. Alonso, V. Bagland, L. Desvillettes, and B. Lods: 
\newblock A priori estimates for solutions to Landau equation under Prodi-Serrin like criteria.
\newblock To appear  in  Arch. Rational Mech. Anal.

\bibitem{ABL}
 R. Alonso, V. Bagland, and B. Lods:
\newblock Uniform estimates on the Fisher information for solutions to Boltzmann and Landau equations.
\newblock {\em Kinet. Relat. Models}, {\bf 12} (2019), {1163--1183}.

\bibitem{CDH}
K. Carrapatoso, L. Desvillettes and L. He.
\newblock Estimates for the large time behavior of the Landau equation in the Coulomb case.
\newblock {\em Arch. Rational Mech. Anal.}, {\bf 224} (2017), {381--420}.
%

\bibitem{FPL} 
L.~Desvillettes.
\newblock Entropy dissipation estimates for the Landau equation in the Coulomb case
and applications.
\newblock {\em J. Funct. Anal.}, {\bf{269}} (2015), 1359--1403. 


\bibitem{GGIV} 
F. Golse, M. Gualdani, C. Imbert, A. Vasseur. 
\newblock Partial Regularity in time for the space homogeneous Landau equation with Coulomb potential. 
\newblock {\em Ann. Sci. Ec. Norm. Super. (4) 55 (2022), no. 6, 1575-1611.}

\bibitem{GG29}
M. Gualdani, N. Guillen.
\newblock On Ap weights and the Landau equations.
\newblock {\em Calc. Var. Partial Differential Equations 58 (2019), no. 1, 58:17.}



\bibitem{GS}
N. Guillen, L. Silvestre.
\newblock The Landau equation does not blow up.
\newblock Arxiv:2311.09420.

\bibitem{GGL}
W. Golding, M. Gualdani, A. Loher.
\newblock Global smooth solutions to the Landau-Coulomb equation in $L^{3/2}$.
\newblock Arxiv:2401.06939.

\bibitem{GL}
W. Golding,  A. Loher.
\newblock Local-in-time strong solutions of the homogeneous 
Landau-Coulomb equation with $L^p$ initial datum.
\newblock Arxiv:2308.10288.


\bibitem{Ji}
S. Ji.
\newblock Dissipation estimates of the Fisher information for the Landau equation.
\newblock Arxiv 2410.09035.

\bibitem{S17}
L. Silvestre. 
\newblock Upper bounds for parabolic equations and the Landau equation.
\newblock {\em J. Differential Equations 262 (2017), no. 3, 3034 - 3055.}

\bibitem{V98}
C. Villani. 
\newblock On a new class of weak solutions to the spatially homogeneous Boltzmann and Landau equations. 
\newblock {\em Arch. Ration. Mech. Anal. 143 (1998), 273 - 307.}


\bibitem{V96}
C. Villani. 
\newblock On the Landau equation: weak stability, global existence. 
\newblock {\em Adv. Diﬀ. Eq.1 (1996), no. 5, 793-816.}





\end{thebibliography}
\end{document}